\newcommand{\mbf}[1]{\mathbf{#1}}
\newcommand{\mbb}[1]{\mathbb{#1}}
\newcommand{\mcf}[1]{\mathcal{#1}}
\newcommand{\Mod}[1]{\ \mathrm{mod}\ #1}
\newenvironment{cpf}{\begin{trivlist} \item[] {\em Proof of Claim.}}{\hspace*{\stretch{1}} $\diamond$ \end{trivlist}}
\DeclareMathOperator{\Span}{span}
\DeclareMathOperator{\oct}{oct}
\DeclareMathOperator{\ocp}{ocp}
\DeclareMathOperator{\STAB}{STAB}
\newcommand{\rank}{\operatorname{rank}}
\DeclareMathOperator{\rec}{rec}
\DeclareMathOperator{\conv}{conv}
\title{Extended formulations for the integer hull of strictly \texorpdfstring{$\Delta$}{Delta}-modular cographic polyhedral cones}
\author{Joseph Paat\inst{1}
\and Zach Walsh \inst{2}
\and Luze Xu\inst{3}}
\institute{Sauder School of Business, University of British Columbia, Canada\\
\email{joseph.paat@sauder.ubc.ca}\\
\and Department of Mathematics and Statistics, Auburn University, USA\\
\email{zwalsh@auburn.edu}\\
\and Department of Mathematics, University of California Davis, USA\\
\email{lzxu@ucdavis.edu}
}
\begin{document}

\maketitle
\thispagestyle{plain}

\begin{abstract}
Conforti et al. give a compact extended formulation for a class of bimodular-constrained integer programs, namely those that model the stable set polytope of a graph with no disjoint odd cycles.
We extend their techniques to design compact extended formulations for the integer hull of translated polyhedral cones whose constraint matrix is strictly $\Delta$-modular and has rows that represent a cographic matroid.
Our work generalizes the important special case from Conforti et al. concerning $4$-connected graphs with odd cycle transversal number at least $4$.
We also discuss the necessity of our assumptions.
\end{abstract}

\section{Introduction}
%

For a constraint matrix $\mbf{A} \in \mbb{Z}^{m\times n}$ with $\rank(\mbf{A}) = n$ and a right-hand side $\mbf{b} \in \mbb{Z}^m$, define the polyhedron 
\[
\mcf{P}(\mbf{A}, \mbf{b}) := \left\{\mbf{x} \in \mbb{R}^n: \mbf{A} \mbf{x} \le \mbf{b}\right\}.
\]
The integer programming problem (IP) is to check whether $\mcf{P}(\mbf{A}, \mbf{b}) \cap \mbb{Z}^n$ is non-empty. 
It remains an open conjecture if (IP) can be solved in polynomial time if the largest $n\times n$ sub-determinant of $\mbf{A}$, denoted by
\[
\Delta(\mbf{A}) := \max\left\{|\det\mbf{B}|:\ \mbf{B}~\text{is an $n\times n$ submatrix of $\mbf{A}$}\right\},
\]
is fixed.
We say $\mbf{A}$ is {\bf $\Delta$-modular} if $\Delta(\mbf{A}) \le \Delta$.

A collection of work supports the conjecture that (IP) can be solved efficiently when $\mbf{A}$ is $\Delta$-modular.
If $\Delta(\mbf{A}) = 1$, then (IP) can be solved by linear programming because $\mbf{A}$ is equivalent (up to a unimodular mapping) to a {\bf totally unimodular (TU)} matrix; see, e.g.,~\cite{AS1986}.
For $\Delta(\mbf{A}) \le 2$, where $\mbf{A}$ is {\bf bimodular}, Artmann et al.~\cite{AWZ2017} show that (IP) can be solved in strongly polynomial time. 
For $\Delta \ge 3$, the conjecture has been verified under extra assumptions, see, e.g.,~\cite{AFJKSWY2024,AEGOVW2016,NaSaZe2022,NNSZ2023,FiJoWeYu2022}.
There are also many structural results to better understand $\Delta$-modular matrices; see, e.g.,~\cite{AS2022,GWZ2018,gribanov2022delta,OW2021,PSWX2023}.
However, to the best of our knowledge, the conjecture remains open for $\Delta \ge 3$.

The bimodular algorithm by Artmann et al.~\cite{AWZ2017} uses Seymour's decomposition of TU matrices; see~\cite{S1980}.
In a different approach, Conforti et al.~\cite{CFHW2022} design polynomial-size extended formulations for a class of bimodular problems.
An {\bf extended formulation} of a polyhedron $\mcf{E} \subseteq \mbb{R}^n$ is a description
\[
\mcf{E} = \left\{\mbf{x} \in\mbb{R}^n:\ \exists~\text{$\mbf{y} \in \mbb{R}^k$ such that}\ \mbf{B}\mbf{x}+\mbf{C}\mbf{y} \le \mbf{d}\right\},
\]
where $\mbf{B} \in \mbb{R}^{\ell \times n}$, $\mbf{C} \in \mbb{R}^{\ell \times k}$, and $\mbf{d}\in \mbb{R}^{\ell}$.
The number $\ell$ is the {\bf size} of the extended formulation. 
One important aspect of extended formulations is the following: if $\conv(\mcf{P}(\mbf{A}, \mbf{b}) \cap \mbb{Z}^n)$ admits an extended formulation that can be constructed in polynomial time, then the corresponding (IP) can be solved in polynomial time by linear programming.

Conforti et al.~\cite{CFHW2022} consider the stable set polytope of a graph $G$:
\begin{align*}
\STAB(G)&:=\conv\{\mbf{x}\in\mbb{Z}_+^n : \mbf{A}\mbf{x}\le \mbf{1}\}\\
&=\conv(\mcf{P}\left(\mbf{A}', \mbf{b}')\cap\mbb{Z}^n\right),
\end{align*}
where $\mbf{A}$ is the edge-node incidence matrix of $G$, and
\[
\mbf{A}'=\begin{bmatrix}\mbf{A}\\-\mbf{I}_n\end{bmatrix}\quad\text{and}\quad\mbf{b}'=\begin{bmatrix}\mbf{1}\\\mbf{0}\end{bmatrix}.
\]
It is known that $\mbf{A}'$ is bimodular if $\ocp(G) = 1$, where $\ocp(G)$ is the maximum number of node-disjoint odd cycles in $G$; see~\cite{GKS1995}.
Conforti et al.~\cite{CFHW2022} show that the stable set polytope $\STAB(G)$ with $\ocp(G) = 1$ admits an extended formulation of size in $O(n^2)$ that can be constructed in polynomial time.

In this note, we look to expand the family of polyhedra $\mcf{P}(\mbf{A}, \mbf{b})$ for which we can construct polynomial-sized extended formulations for $\conv(\mcf{P}(\mbf{A}, \mbf{b}) \cap \mbb{Z}^n)$.
In particular, we consider polyhedra defined by {\bf strictly $\Delta$-modular matrices}, which are matrices $\mbf{A}$ such that all $n\times n$ submatrices $\mbf{B}$ of $\mbf{A}$ satisfy $\det\mbf{B} \in \{0, \pm \Delta\}$.
For strictly $\Delta$-modular matrices, the bounded determinant IP conjecture has been answered for $\Delta \le 4$; see~\cite{NNSZ2023,NaSaZe2022}. 
Interestingly, the conjecture seems to be open for strictly $\Delta$-modular matrices with $\Delta \ge 5$.

\section{Results}

Our main result covers integer programs defined by a translated cone $\mcf{P}(\mbf{A}, \mbf{b})$ and a strictly $\Delta$-modular matrix $\mbf{A}$ such that the column matroid $M(\mbf{A}^\top)$ of $\mbf{A}^\top$, i.e., the row matroid of $\mbf{A}$, is {\bf cographic}. 
Note $\mcf{P}(\mbf{A}, \mbf{b})$ is a translated cone if and only if $\mbf{b}$ is in the column span of $\mbf{A}$, which we denote by $\Span(\mbf{A})$.
\begin{theorem}\label{thmMain}
Fix $\Delta \in \mbb{N}$.
Let $\mbf{A} \in \mbb{Z}^{m\times n}$ have $\rank(\mbf{A}) = n$ and $\mbf{b} \in \mbb{Z}^m$.
If
\begin{enumerate}[label=(\roman*), leftmargin = *]
    \item\label{Assume1}$\mbf{A}$ is strictly $\Delta$-modular,
    \item\label{Assume2}the column matroid $M(\mbf{A}^\top)$ is cographic, and
    \item\label{Assume3}$\mbf{b} \in \mathrm{span}(\mbf{A})$,
\end{enumerate}
then $\conv(\mcf{P}(\mbf{A}, \mbf{b}) \cap\mbb{Z}^n)$ admits an extended formulation of size in $O(n^{\Delta})$ that can be constructed in polynomial time.
\end{theorem}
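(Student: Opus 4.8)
The plan is to follow the disjunctive strategy of Conforti et al.: exhibit $\conv(\mcf{P}(\mbf{A},\mbf{b})\cap\mbb{Z}^n)$ as the convex hull of a polynomial-size family of integral (hence easily described) polyhedra, and then assemble a compact extended formulation from the pieces via Balas's union-of-polyhedra construction. First I would exploit assumption \ref{Assume3}. Since $\rank(\mbf{A})=n$ and $\mbf{b}\in\Span(\mbf{A})$, there is a unique $\mbf{x}_0\in\mbb{Q}^n$ with $\mbf{A}\mbf{x}_0=\mbf{b}$, so $\mcf{P}(\mbf{A},\mbf{b})=\mbf{x}_0+\{\mbf{z}:\mbf{A}\mbf{z}\le\mbf{0}\}$ is genuinely a translated cone, and because $\mbf{A}$ is injective the linear isomorphism $\mbf{x}\mapsto\mbf{A}\mbf{x}$ onto $\Span(\mbf{A})$ identifies $\mcf{P}(\mbf{A},\mbf{b})\cap\mbb{Z}^n$ with the lattice set $\{\mbf{y}\in\mbf{A}\mbb{Z}^n:\mbf{y}\le\mbf{b}\}$. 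Thus it suffices to describe the convex hull of the sublattice points below $\mbf{b}$ and pull it back through $\mbf{A}$.

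The next step is to quantify the gap between $\mbf{A}\mbb{Z}^n$ and its saturation $\Lambda:=\Span(\mbf{A})\cap\mbb{Z}^m$. By assumption \ref{Assume1} every nonzero maximal minor of $\mbf{A}$ equals $\pm\Delta$, so the gcd of the $n\times n$ minors is $\Delta$; equivalently the product of the Smith invariant factors of $\mbf{A}$ is $\Delta$, whence the finite quotient group $\Lambda/\mbf{A}\mbb{Z}^n$ has order exactly $\Delta$. Membership $\mbf{y}\in\mbf{A}\mbb{Z}^n$ is therefore governed by a single homomorphism $\phi\colon\Lambda\to\Lambda/\mbf{A}\mbb{Z}^n$ of bounded order, and the target set is $\{\mbf{y}\in\Lambda:\mbf{y}\le\mbf{b},\ \phi(\mbf{y})=0\}$. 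Here assumption \ref{Assume2} enters: because a cographic matroid is regular, $\Span(\mbf{A})$ is cut out by a totally unimodular system, so the relaxed set $\{\mbf{y}\in\Lambda:\mbf{y}\le\mbf{b}\}$ --- forgetting the congruence $\phi(\mbf{y})=0$ --- is the integer-point set of an integral polyhedron that I can write down explicitly from a graph $G$ representing $M(\mbf{A}^\top)$.

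With this setup, I would build the disjunction. The congruence $\phi(\mbf{y})=0$ is the only obstruction to integrality, and the cographic structure should localize it: the torsion of $\Lambda/\mbf{A}\mbb{Z}^n$ ought to be traceable to a bounded substructure of $G$ (the cographic analogue of the disjoint-odd-cycle obstruction controlled in the $\ocp(G)=1$ case). I would then show that $\{\mbf{y}\in\Lambda:\mbf{y}\le\mbf{b},\ \phi(\mbf{y})=0\}$ splits, according to which inequalities of a bounded-codimension face are tight and which residue the remaining coordinates carry, into a family of $O(n^{\Delta})$ pieces, each the integer set of a polyhedron defined by a totally unimodular slice and hence integral. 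Applying Balas's extended formulation for the convex hull of the union of these polyhedra, and transporting it back through $\mbf{A}$ on $\Span(\mbf{A})$, yields a formulation whose size is the sum of the sizes of the pieces, namely $O(n^{\Delta})$; each piece is produced by a polynomial-time graph computation, so the whole formulation is constructible in polynomial time.

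The main obstacle is the structural heart of the third step: proving that the interaction of the order-$\Delta$ congruence $\phi$ with the cut space of $G$ forces each disjunctive piece to be totally unimodular, while simultaneously bounding the number of pieces by $O(n^{\Delta})$. This demands the correct cographic generalization of the odd-cycle analysis of Conforti et al. --- identifying the bounded gadget in $G$ responsible for the determinant $\Delta$, verifying that fixing its behaviour trivializes $\phi$ on the complementary regular part, and checking that the enumeration over gadget behaviours and tight faces stays polynomial. Everything else --- the lattice reduction, the order-$\Delta$ count, and the Balas assembly --- is routine once this decomposition is in hand.
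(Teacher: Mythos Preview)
Your reduction to a congruence-constrained lattice problem in the slack space is correct and tracks the paper: after passing to slacks $\mbf{y}=\mbf{b}-\mbf{A}\mbf{x}$, the target becomes $\conv\{\mbf{y}\in\mbb{Z}^m_+:\mbf{R}\mbf{y}=\mbf{0},\ \phi(\mbf{y})=\text{const}\}$ for a TU matrix $\mbf{R}$ and a homomorphism $\phi$ into a group of order~$\Delta$. The gap is your third step. You propose to ``trace the torsion to a bounded substructure of $G$'' and split into $O(n^\Delta)$ TU slices by fixing that gadget's behaviour. There is no such local gadget in general: the congruence $\phi$ is a global linear functional on $\mbb{Z}^m$, not one supported on a bounded-size coordinate set, and nothing in the cographic hypothesis forces it to localize. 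The single-odd-cycle picture you are extrapolating from Conforti et al.\ is specific to node--edge incidence matrices of graphs with $\ocp=1$; it is not a feature of arbitrary strictly $\Delta$-modular cographic $\mbf{A}$, and you have no argument that fixing a bounded object renders each piece TU or that the piece count stays polynomial.

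What you are missing is the correct use of assumption~\ref{Assume2}. The point is duality: $M(\mbf{A}^\top)$ cographic means its \emph{dual} $M^*(\mbf{A}^\top)$ is graphic, and $\mbf{R}=[-\mbf{A}_N\mbf{A}_B^{-1}~\mbf{I}_{m-n}]$ represents exactly this dual. Using Lemmas~\ref{lemOx641} and~\ref{lemOx1017} one replaces $\mbf{R}$ by the node--arc incidence matrix $\mbf{D}$ of a graph $H$ with the \emph{same kernel} (matroid equality alone is not enough here; one must match kernels, which is why the two TU lemmas are needed). Then $\{\mbf{y}\in\mbb{Z}^m_+:\mbf{R}\mbf{y}=\mbf{0}\}$ is precisely the set of nonnegative integer circulations in $H$, and the problem becomes a \emph{congruency-constrained circulation} problem. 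This is handled not by localizing $\phi$ but by lifting to a $\Delta$-fold cover $H'$ of $H$, one copy of $V(H)$ per coset. A vertex of the target polyhedron decomposes into at most $\Delta-1$ directed cycles in $H$, which lift to walks in $H'$ between prescribed layers; Balas's union is taken over the at most $(\Delta-1)^{\Delta-1}\cdot|V(H)|^{\Delta-1}$ choices of base vertices and coset-increment sequences, each piece being a Minkowski sum of uncapacitated unit-flow polyhedra in $H'$. Since $m\in O(n)$ for cographic matroids, this yields the $O(n^\Delta)$ bound. The exponent $\Delta$ comes from enumerating these $\le\Delta-1$ base vertices in $H$, not from any gadget in the original graph.
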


Theorem~\ref{thmMain} and the underlying analysis come with a handful of consequences. 
Before discussing these consequences, we mention that Aprile et al.~\cite{AFJKSWY2024} refer to the cographic case as that in which $\mbf{A}$ is the transpose of a network matrix, so $M(\mbf{A}^\top)$ is a graphic matroid.
In contrast, we assume $M(\mbf{A}^\top)$ is cographic.
We also mention that Condition \emph{\ref{Assume3}} is inspired by the work of Conforti et al.~\cite{CFHW2022} and of Veselov and Chirkov~\cite{VC2009}, who demonstrate that if $\mbf{A}$ is bimodular, then $\conv(\mathcal{P}\cap\mbb{Z}^n)$ can be characterized by the integer hulls of the vertex cones.

In addition to new families of extended formulations, we believe that one of the most interesting contributions is a technical lemma that may be of independent interest; see Lemma~\ref{lem:compact_ef_delta}.
We briefly outline our proof of Theorem~\ref{thmMain}, and consequently, we contextualize Lemma~\ref{lem:compact_ef_delta}.
First, we use Lemmas~\ref{lem:PQ_U} and~\ref{lem:reformulation_delta} to reformulate $\conv(\mcf{P}(\mbf{A}, \mbf{b}) \cap\mbb{Z}^n)$ into a polyhedron $\mcf{Q}(\mbf{A}, \mbf{b})$ in the space of slack variables.
Next, using Conditions~\emph{\ref{Assume1}}-\emph{\ref{Assume3}}, we argue that $\mcf{Q}(\mbf{A}, \mbf{b})$ can be viewed as a congruency-constrained uncapacitated circulation problem.
In this circulation problem, the congruency constraint depends on $\Delta$.
Hence, we can use our bound on $\Delta$ from Condition \emph{\ref{Assume1}} to construct a compact extended formulation; this is Lemma~\ref{lem:compact_ef_delta}.
Conforti et al.~\cite{CFHW2022} use a similar transformation, and some of their arguments appear in a more general form in Conforti et al.~\cite{CFHSW2020}.
However, our work provides an alternative proof that allows us to generalize parts of their work. 
%
%
We collect lemmas to prove Theorem~\ref{thmMain} in Section~\ref{prelimResults} and prove Theorem~\ref{thmMain} in Section~\ref{secProof}.

Our analysis also allows us to relax the `strictly' part of Condition~\emph{\ref{Assume1}} to $\Delta$-modular when $\gcd(\mbf{A})=1$, where
\[
\gcd(\mbf{A}) := \gcd\{|\det \mbf{B}|:\mbf{B}~\text{is a $n \times n$ submatrix of}~\mbf{A}\}.
\]
If $\gcd(\mbf{A})=1$, then $\mcf{P}(\mbf{A}, \mbf{b}) = \conv(\mcf{P}(\mbf{A}, \mbf{b}) \cap \mbb{Z}^n)$; see Remark~\ref{remark2}.
In particular, the conclusion of Theorem~\ref{thmMain} holds if $\mbf{A}$ is bimodular and satisfies Conditions~\emph{\ref{Assume2}} and~\emph{\ref{Assume3}}.

As a final consequence, we can use Theorem~\ref{thmMain} and a structural lemma (Lemma 8 in \cite{CFHW2022}) to recover one of the results by Conforti et al.
To state their result, define the {\bf odd cycle transversal number} $\oct(G)$ of a graph $G = (V,E) $ as the minimum cardinality of a set of nodes $X \subseteq V$ such that $G - X$ is bipartite.

\begin{corollary}\label{corConforti}({\bf Follows from Theorem 2 and Theorem 14 in \cite{CFHW2022}})
    If $G$ is a $4$-connected graph on $n$ nodes with $\ocp(G) \le 1$ and $\oct(G) \ge 4$, then $\STAB(G)$ admits an extended formulation whose size is in $O(n^2)$.
\end{corollary}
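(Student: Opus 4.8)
The plan is to derive Corollary~\ref{corConforti} by verifying that the graphs in question produce a constraint matrix satisfying the three hypotheses of Theorem~\ref{thmMain}, and then invoking the theorem together with the cited structural lemma of Conforti et al. The starting point is the matrix $\mbf{A}'=\left[\begin{smallmatrix}\mbf{A}\\-\mbf{I}_n\end{smallmatrix}\right]$ whose polyhedron $\mcf{P}(\mbf{A}',\mbf{b}')$ defines $\STAB(G)$, where $\mbf{A}$ is the edge-node incidence matrix of $G$. First I would recall that $\ocp(G)\le 1$ forces $\mbf{A}'$ to be bimodular, i.e. $\Delta(\mbf{A}')=2$, by the result of \cite{GKS1995} already quoted in the introduction. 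Since Theorem~\ref{thmMain} is stated for strictly $\Delta$-modular matrices, the key reduction is to pass through the relaxation noted after the theorem: because $\gcd(\mbf{A}')=1$ for a graph incidence matrix (a single edge gives a $\pm1$ subdeterminant), the conclusion of Theorem~\ref{thmMain} applies to $\mbf{A}'$ in its \emph{bimodular} form, so I only need Conditions~\ref{Assume2} and~\ref{Assume3} for $\Delta=2$.

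The second step is to check the cographic condition~\ref{Assume2}, namely that the row matroid $M((\mbf{A}')^{\top})$ is cographic. This is exactly where the hypotheses ``$4$-connected'' and ``$\oct(G)\ge 4$'' enter, and this is where I expect to lean on Lemma~8 of \cite{CFHW2022} (the structural lemma referenced in the corollary's statement). The plan is to use that lemma to show that under $4$-connectivity, $\ocp(G)\le 1$, and $\oct(G)\ge 4$, the relevant matroid associated to $\mbf{A}'$ is cographic. The third condition~\ref{Assume3} requires $\mbf{b}'\in\Span(\mbf{A}')$, i.e. that the polyhedron is a translated cone; here I would verify that the presence of the block $-\mbf{I}_n$ makes $\Span(\mbf{A}')=\mbb{R}^n$, so membership is automatic and~\ref{Assume3} holds trivially.

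With all three conditions in hand, Theorem~\ref{thmMain} gives an extended formulation of $\conv(\mcf{P}(\mbf{A}',\mbf{b}')\cap\mbb{Z}^n)=\STAB(G)$ of size $O(n^{\Delta})$. Since we are in the bimodular regime $\Delta=2$, this is $O(n^2)$, matching the claimed bound. The main obstacle, and the step deserving the most care, is the matroid-theoretic verification in the second step: it is not obvious that the combinatorial hypotheses on $G$ translate into cographicness of $M((\mbf{A}')^{\top})$, and the precise bridge must come from correctly applying Lemma~8 of \cite{CFHW2022}. A secondary subtlety is bookkeeping the exact form of the matrix whose matroid is claimed cographic—whether it is $\mbf{A}$, $\mbf{A}'$, or a submatrix obtained after accounting for the identity block—since the definition of STAB uses $\mbf{A}'$ but the cographic/odd-cycle structure is a property of the incidence matrix $\mbf{A}$ of $G$ itself. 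I would reconcile these by tracking how the slack reformulation from Lemmas~\ref{lem:PQ_U} and~\ref{lem:reformulation_delta} interacts with the identity rows, ensuring the cographic hypothesis is applied to the correct object.
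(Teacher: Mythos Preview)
Your verification of Condition~\emph{\ref{Assume3}} for the matrix $\mbf{A}'=\left[\begin{smallmatrix}\mbf{A}\\-\mbf{I}_n\end{smallmatrix}\right]$ is incorrect, and this is fatal to the approach as written. The column span of $\mbf{A}'$ is an $n$-dimensional subspace of $\mbb{R}^{m+n}$, namely $\{(\mbf{A}\mbf{x},-\mbf{x}):\mbf{x}\in\mbb{R}^n\}$; for $\mbf{b}'=(\mbf{1},\mbf{0})$ to lie in it you would need $\mbf{x}=\mbf{0}$ and simultaneously $\mbf{A}\mbf{0}=\mbf{1}$. So $\mbf{b}'\notin\Span(\mbf{A}')$, the polyhedron $\mcf{P}(\mbf{A}',\mbf{b}')$ is not a translated cone (it is bounded), and Theorem~\ref{thmMain} does not apply to $(\mbf{A}',\mbf{b}')$ at all.

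The paper avoids this by working with the edge--node incidence matrix $\mbf{A}$ alone and $\mbf{b}=\mbf{1}$. Here $\mbf{1}\in\Span(\mbf{A})$ because $\mbf{A}\cdot\tfrac12\mbf{1}=\mbf{1}$, so Condition~\emph{\ref{Assume3}} holds. Lemma~8 of \cite{CFHW2022} is not the source of the cographic property as you suggest; rather it is the bridge back to the stable set polytope, stating that $\STAB(G)=\conv(\mcf{P}(\mbf{A},\mbf{1})\cap\mbb{Z}^n)\cap[0,1]^n$. The cographic property of $M(\mbf{A}^\top)$ is instead obtained from Lov\'asz's classification (Theorem~\ref{thm:Lovasz}): one first proves, using the $4$-connectivity of $G$ together with $\oct(G)\ge 4$ and $\ocp(G)=1$, that this signed-graphic matroid is internally $4$-connected, and then rules out the $R_{10}$ and graphic outcomes of Lov\'asz's theorem via $\oct(G)\ge 4$, leaving only the cographic case. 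With Conditions~\emph{\ref{Assume1}}--\emph{\ref{Assume3}} verified for $(\mbf{A},\mbf{1})$, Theorem~\ref{thmMain} (via Remark~\ref{remark2} for the bimodular case) gives a size-$O(n^2)$ extended formulation of $\conv(\mcf{P}(\mbf{A},\mbf{1})\cap\mbb{Z}^n)$, and intersecting with $[0,1]^n$ finishes. Your own ``secondary subtlety'' paragraph correctly flags the issue of which matrix to use; the resolution is that it must be $\mbf{A}$, not $\mbf{A}'$.
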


\noindent 

In their proof of Corollary~\ref{corConforti}, Conforti et al. use a characterization due to Lov\'{a}sz stating that a $4$-connected graph $G$ with $\ocp(G) \le 1$ and $\oct(G) \ge 4$ has an even-face embedding in the projective plane.
Our approach offers a direct algebraic proof to sidestep the complexities introduced by the even-face embedding argument.
Our proof of Corollary~\ref{corConforti} follows from Theorem \ref{thmMain} via a result of Lov\'{a}sz (see Theorem \ref{thm:Lovasz}) that the column matroid of the node-edge incidence matrix of $G$ is cographic when $G$ is $4$-connected with $\ocp(G) \le 1$ and $\oct(G) \ge 4$; see Section~\ref{secGeneralizing}.

Theorem \ref{thmMain} extends Corollary \ref{corConforti} in two directions.
First, the results in \cite{CFHW2022} apply only with $\mbf b = \mbf 1$, while Theorem \ref{thmMain} holds for any $\mbf{b}\in \Span(\mbf A)$.
Second, the corollary applies only to a specific class of cographic bimodular matrices (i.e., the matrix $\mbf{A}'$ defining $\STAB(G)$), while Theorem \ref{thmMain} holds for all bimodular cographic matrices; see our extension from strictly bimodular to bimodular in Remark~\ref{remark2}.
In particular, it follows from a result of Slilaty~\cite{SLILATY2005207} that if $\mbf {A}^\top$ represents a cographic matroid of a graph $H$ and is also the node-edge incidence matrix of a graph, then $H$ is projective-planar.
Therefore, the following corollary of Theorem \ref{thmMain} gives a new family of matrices for which the corresponding polytope has a polynomial-size extended formulation; these results do not follow from work in \cite{CFHW2022}.
We discuss this in Section~\ref{secNew}.

\begin{corollary}\label{corNewFamilies}
    Let $H$ be a graph that is not projective-planar, $\mbf{A}^\top$ be bimodular with $\rank(\mbf{A}^\top) = n$ that represents the cographic matroid of $H$, and $\mbf b \in \mathrm{span}(\mbf A)$.
    It holds that $\conv(\mcf{P}(\mbf{A}, \mbf{b}) \cap\mbb{Z}^n)$ admits an extended formulation of size in $O(n^2)$.
\end{corollary}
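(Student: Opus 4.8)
The plan is to obtain Corollary~\ref{corNewFamilies} as a direct specialization of Theorem~\ref{thmMain} at $\Delta = 2$, combined with the bimodular relaxation recorded in Remark~\ref{remark2}; essentially all of the technical weight sits in those two ingredients, so the job is to check that the three hypotheses of Theorem~\ref{thmMain} hold. First I would reconcile the modularity assumption, which is stated for $\mbf{A}^\top$ rather than $\mbf{A}$. Since $\mbf{A} \in \mbb{Z}^{m\times n}$ has $\rank(\mbf{A}) = n$, every $n\times n$ submatrix of $\mbf{A}^\top \in \mbb{Z}^{n\times m}$ is selected by choosing $n$ of its columns, and such a submatrix is the transpose of the $n\times n$ submatrix of $\mbf{A}$ formed by the corresponding $n$ rows (and all $n$ columns). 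As determinants are invariant under transposition, the two matrices have the same collection of maximal minors, so $\Delta(\mbf{A}) = \Delta(\mbf{A}^\top) \le 2$ and $\mbf{A}$ is bimodular. Condition~\ref{Assume2} holds because $M(\mbf{A}^\top)$ is by hypothesis the cographic matroid of $H$, and Condition~\ref{Assume3} is exactly the assumption $\mbf{b}\in\Span(\mbf{A})$.

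The one point requiring care is that Theorem~\ref{thmMain} is stated for \emph{strictly} $\Delta$-modular matrices, whereas here $\mbf{A}$ is only bimodular. I would dispatch this exactly through Remark~\ref{remark2}: a bimodular $\mbf{A}$ satisfies $\gcd(\mbf{A})\in\{1,2\}$, and the two cases are handled separately. If $\gcd(\mbf{A}) = 2$, then every nonzero maximal minor equals $\pm 2$, so $\mbf{A}$ is strictly $2$-modular and Theorem~\ref{thmMain} applies verbatim with $\Delta = 2$, producing an extended formulation of size $O(n^2)$. If $\gcd(\mbf{A}) = 1$, then by Remark~\ref{remark2} we have $\mcf{P}(\mbf{A},\mbf{b}) = \conv(\mcf{P}(\mbf{A},\mbf{b})\cap\mbb{Z}^n)$, so the integer hull is already polyhedral and the size bound again follows as in Remark~\ref{remark2}. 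Either way the ``$\mbf{A}$ bimodular'' conclusion asserted just after Remark~\ref{remark2} yields the claimed $O(n^2)$ extended formulation.

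What remains is to explain the role of the non-projective-planarity of $H$, and here I would stress that it is \emph{not} used to establish existence of the extended formulation at all. Rather, it is what guarantees that the corollary exhibits genuinely new examples. By Slilaty's theorem~\cite{SLILATY2005207}, if the cographic matroid of $H$ were also representable by the node-edge incidence matrix of a graph, then $H$ would be projective-planar; contrapositively, assuming $H$ is not projective-planar forces $\mbf{A}^\top$ to differ from every such incidence matrix, so $\mcf{P}(\mbf{A},\mbf{b})$ cannot arise from the stable-set construction underlying~\cite{CFHW2022}. Consequently the ``main obstacle'' here is conceptual rather than computational: the short derivation above is routine given Theorem~\ref{thmMain}, and the real content of the corollary is the observation, deferred to Section~\ref{secNew}, that this family falls outside the scope of~\cite{CFHW2022}.
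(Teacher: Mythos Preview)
Your proposal is correct and follows essentially the same approach the paper takes: Corollary~\ref{corNewFamilies} is treated as an immediate consequence of Theorem~\ref{thmMain} at $\Delta=2$ together with the bimodular extension discussed in Remark~\ref{remark2}, and the non-projective-planarity of $H$ plays no role in the existence of the extended formulation but only serves (via Slilaty's result) to certify that these instances are genuinely new. One small point of imprecision: when $\gcd(\mbf{A})=1$ you say the size bound ``follows as in Remark~\ref{remark2},'' but Remark~\ref{remark2} does not itself state a size bound; the $O(n^2)$ bound comes either from Lemma~\ref{lem:rational_cone} (since $\gcd(\mbf{A})=1$ and $\mbf{b}\in\Span(\mbf{A})\cap\mbb{Z}^m$ force the apex $\overline{\mbf{x}}$ to be integral) or from the fact that $m\in O(n)$ for cographic $M(\mbf{A}^\top)$, so $\mcf{P}(\mbf{A},\mbf{b})$ already has $O(n)$ constraints.
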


Consider the following setting covered by Corollary~\ref{corNewFamilies}.
If $H$ is a non-projective-planar graph, then the cographic matroid of $H$ has a representation as the row matroid of a unimodular matrix $\mbf U$.
We can then apply any bimodular column operations and row scalings to obtain a bimodular matrix $\mbf A$ so that $M(\mbf{U}^\top) = M(\mbf{A}^\top)$.
In particular, $M(\mbf{A}^\top)$ is cographic, and Corollary \ref{corNewFamilies} applies to $\mbf{A}^\top$.

A natural question is how to further generalize Theorem~\ref{thmMain}.
It turns out that neither \emph{\ref{Assume2}} nor \emph{\ref{Assume3}} can be dropped for the statement of Theorem~\ref{thmMain} to remain true.
In order to sees this, we turn to examples by Cevallos et al.~\cite{CWZ2018} and Jia et al.~\cite{jia2023exact} in Section~\ref{secExamples}.

\section{Preliminaries and notation}\label{secPreliminaries}

We use upper case bold font to denote matrices, and lower case bold font for vectors.
For $\mbf{x} \in \mbb{R}^n$, we denote the $i$th entry of $\mbf{x}$ by $x_i$.
For $I \subseteq \{1, \dotsc, n\}$, we set $\mbf{x}(I) := \sum_{i \in C} x_i$.
We use $\mbf{1}$ and $\mbf{0}$ to denote the matrix (or vector) of all-ones and all-zeros, respectively; the dimensions are implied by context.
We use $\mbf{I}_r$ to denote the $r\times r$ identity matrix.
The $i$th standard unit vector in $\mbb{R}^n$ is denoted by $\mbf{e}_i$.
We use $\mbb{Z}_+$ and $\mbb{R}_+$ to denote the set of non-negative integers and real numbers, respectively.
For an invertible matrix $\mbf{B} \in \mbb{Z}^{n\times n}$ and vectors $\mbf{u}, \mbf{v} \in \mbb{Z}^n$, we write $\mbf{u} \equiv \mbf{v} \Mod \mbf{B}\mbb{Z}^n$ if $\mbf{u} - \mbf{v} \in \mbf{B}\mbb{Z}^n$.
The relation$\Mod \mbf{B}\mbb{Z}^n$ partitions $\mbb{Z}^n$ into $|\det \mbf{B}|$ equivalence classes; see, e.g.,~\cite{AS1986}.

We use standard notation in matroid theory; for more, see~\cite{Oxley}.
A {\bf matroid} is a pair $M = (E,r)$, where $E = E(M)$ is a finite ground set and $r:2^E \to \mbb{Z}$ is a non-decreasing submodular rank function.
We write matroids, their ground sets, and elements within using italic font.
If $\mbf{A}$ is a matrix over a field $\mbb{F}$ with columns indexed by $E$, then the function $r : 2^E \to \mathbb Z$ defined by $r(X) := \rank(\mbf{A}[X])$ defines a matroid $M(\mbf A)$ on $E$ called the {\bf column matroid} of $\mbf A$, where $\mbf{A}[X]$ is the set of columns of $\mbf{A}$ corresponding to $X$.
We say that $M(\mbf A)$ is {\bf $\mathbb F$-representable}.
If $M = M(\mbf A)$ for an integer matrix $\mbf A$ (viewed over $\mathbb R$ or $\mathbb Q$) with $\Delta(\mbf A) = 1$, then $M$ is {\bf regular}.
A {\bf signed node-edge incidence matrix} of a graph $H$ is the node-arc incidence matrix of a directed version of $H$; the column corresponding to edge $\{u,v\}$ is the difference of the unit vectors $\mbf e_u$ and $\mbf e_v$.
 If $\mbf A$ is a signed node-edge incidence matrix of a graph $H$, then $M(\mbf A)$ is also the {\bf graphic matroid} of $H$, denoted by $M(H)$.
The rank function of $M(H)$ is $r(X) = |V(H[X])| - c(H[X])$ for all sets $X$ of edges of $H$, where $c(H[X])$ is the number of connected components of the subgraph $H[X]$.
Let $K_r$ be the complete graph on $r$ nodes.
The canonical representation of $M(K_r)$ over any field is 
\[
[\begin{array}{@{\hskip .1 cm}c@{\hskip .15 cm}c@{\hskip .1 cm}}\mbf{I}_{r-1}&\mbf{D}_{r-1}\end{array}] \in \mbb{Z}^{(r-1)\times ((r-1)+\binom{r-1}{2})},
\]
where the columns of $\mbf{D}_{r-1}$ are $\mbf{e}_i - \mbf{e}_j$ for all $1 \le i < j \le r-1$.
The {\bf dual} of a matroid $M = (E, r)$ is the matroid $M^* = (E, r^*)$, where $r^*(X) = |X| - (r(E) - r(E - X))$ for all $X \subseteq E$. 
If $M$ is the dual of a graphic matroid, then $M$ is {\bf cographic}. The canonical representation of $M(K_r)^*$ over any field is $
[-\mbf{D}_{r-1}^\top ~\mbf{I}_{\binom{r-1}{2}}].$

\section{Lemmas for Theorem~\ref{thmMain}.}\label{prelimResults}

%
The first setting we handle in Theorem~\ref{thmMain} is when $\mbf{b}$ is an integer linear combination of the columns of $\mbf{A}$.
Here, $\mcf{P}(\mbf{A}, \mbf{b})$ has an integral vertex, so it is already a compact extended formulation of $\conv(\mcf{P}(\mbf{A}, \mbf{b})\cap \mbb{Z}^n)$.
%
%

\begin{lemma}\label{lem:rational_cone}
Let $\mbf{A} \in \mbb{Z}^{m\times n}$ be $\Delta$-modular with $\rank(\mbf{A}) = n$ and $\mbf{b} \in \mbb{Z}^m$.
If $\mbf{A}\overline{\mbf{x}} = \mbf{b}$ for some $\overline{\mbf{x}}\in\mbb{Z}^n$, then $\mcf{P}(\mbf{A}, \mbf{b}) = \conv(\mcf{P}(\mbf{A}, \mbf{b})\cap \mbb{Z}^n)$ and has $O(n^2\Delta^2)$ many facets.
\end{lemma}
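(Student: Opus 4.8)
The plan is to split the statement into its two claims---the integrality identity $\mcf{P}(\mbf{A},\mbf{b}) = \conv(\mcf{P}(\mbf{A},\mbf{b})\cap\mbb{Z}^n)$ and the facet count---and treat them separately. For the identity I would first use the hypothesis $\mbf{b} = \mbf{A}\overline{\mbf{x}}$ with $\overline{\mbf{x}}\in\mbb{Z}^n$ to translate the polyhedron onto a cone: the system $\mbf{A}\mbf{x}\le\mbf{b}$ is equivalent to $\mbf{A}(\mbf{x}-\overline{\mbf{x}})\le\mbf{0}$, so $\mcf{P}(\mbf{A},\mbf{b}) = \overline{\mbf{x}} + \mcf{C}$, where $\mcf{C} := \{\mbf{d}\in\mbb{R}^n : \mbf{A}\mbf{d}\le\mbf{0}\}$ is a rational polyhedral cone, pointed because $\rank(\mbf{A})=n$. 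Since $\overline{\mbf{x}}$ is integral, translation by $\overline{\mbf{x}}$ maps $\mbb{Z}^n$ bijectively onto itself, so $\mcf{P}(\mbf{A},\mbf{b})\cap\mbb{Z}^n = \overline{\mbf{x}} + (\mcf{C}\cap\mbb{Z}^n)$ and hence $\conv(\mcf{P}(\mbf{A},\mbf{b})\cap\mbb{Z}^n) = \overline{\mbf{x}} + \conv(\mcf{C}\cap\mbb{Z}^n)$. The identity therefore reduces to the purely conic statement $\mcf{C} = \conv(\mcf{C}\cap\mbb{Z}^n)$.

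To prove $\mcf{C} = \conv(\mcf{C}\cap\mbb{Z}^n)$ I would use that the extreme rays of the pointed rational cone $\mcf{C}$ admit integral generators $\mbf{r}_1,\dots,\mbf{r}_k\in\mcf{C}\cap\mbb{Z}^n$. Given any $\mbf{c} = \sum_i \lambda_i\mbf{r}_i\in\mcf{C}$ with $\lambda_i\ge 0$, I would choose positive integers $T_i$ large enough that $\sum_i \lambda_i/T_i\le 1$ and write $\mbf{c} = \sum_i (\lambda_i/T_i)(T_i\mbf{r}_i) + (1-\sum_i \lambda_i/T_i)\,\mbf{0}$. As $\mbf{0}$ and each $T_i\mbf{r}_i$ lie in $\mcf{C}\cap\mbb{Z}^n$, this exhibits $\mbf{c}$ as a convex combination of integer points of $\mcf{C}$; the reverse inclusion is immediate. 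This half of the lemma is routine and uses only rationality of $\mcf{C}$, not $\Delta$-modularity.

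For the facet bound I would note that facets are invariant under the translation above, so it suffices to bound the number of facets of $\mcf{C}$. Every facet-defining inequality is a positive multiple of some row of $\mbf{A}$, and distinct facets have distinct outer normals up to positive scaling; hence the number of facets is at most the number of rows of $\mbf{A}$ taken up to positive scalar multiples, which is in turn at most the number of \emph{distinct} rows of $\mbf{A}$. At this point I would invoke a Heller-type bound on the number of distinct rows of a matrix with bounded subdeterminants (see, e.g., \cite{GWZ2018}): a rank-$n$, $\Delta$-modular matrix has $O(n^2\Delta^2)$ distinct rows. Applied to $\mbf{A}$, this yields the claimed $O(n^2\Delta^2)$ bound on the number of facets.

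The main obstacle is precisely this counting bound. The integrality identity is elementary, but bounding the number of non-parallel rows is the quantitative heart of the statement and is where $\Delta$-modularity is genuinely used. If a directly citable form of order $n^2\Delta^2$ is unavailable, I would reprove a Heller-type estimate by fixing an $n\times n$ basis $\mbf{A}_B$ of rows, passing to the normalized matrix $\mbf{A}\mbf{A}_B^{-1}$, and using Cramer's rule to force each entry into $\tfrac{1}{|\det\mbf{A}_B|}\{-\Delta,\dots,\Delta\}$; one then controls the parallel classes by the bounded support patterns together with $O(\Delta^2)$ choices per pair of basis coordinates, for a total of $O(\binom{n}{2}\Delta^2)=O(n^2\Delta^2)$.
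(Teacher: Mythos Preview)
Your proposal is correct and matches the paper's proof: translate by $\overline{\mbf{x}}$ to a rational pointed cone, use that such a cone equals its own integer hull, and bound the number of facets by the number of distinct rows of $\mbf{A}$ via a Heller-type result (the paper cites Theorem~2 in \cite{LPSX2022} rather than \cite{GWZ2018} for the $O(n^2\Delta^2)$ bound). Your fallback sketch for the row count is not quite complete as stated---the jump from bounded entries of $\mbf{A}\mbf{A}_B^{-1}$ to ``$O(\Delta^2)$ choices per pair of basis coordinates'' tacitly assumes each normalized row has support size at most two, which is the Heller argument for TU matrices but needs an extra step in the $\Delta$-modular setting---but since your primary plan is to cite an existing bound this does not affect the argument.
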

\begin{proof}
As $\rank(\mbf{A}) = n$ and $\mbf{A}\overline{\mbf{x}}= \mbf{b}$, we have that $\mcf{P}(\mbf{A}, \mbf{b})$ is a translated rational cone with vertex $\overline{\mbf{x}} \in \mbb{Z}^n$.
The integer hull of a rational cone is the cone itself, so $\mcf{P}(\mbf{A}, \mbf{b}) = \conv(\mcf{P}(\mbf{A}, \mbf{b})\cap \mbb{Z}^n)$.
The facets of $\mcf{P}(\mbf{A}, \mbf{b})$ correspond to distinct rows of $\mbf{A}$, of which there are at most $O(n^2\Delta^2)$; see Theorem 2 in~\cite{LPSX2022}.
\qed
\end{proof}

The remaining lemmas address the setting in which $\mbf{b}$ is not an integer linear combination of the columns of $\mbf{A}$.
As in~\cite{CFHW2022}, we perform an affine transformation of $\conv(\mcf{P}(\mbf{A}, \mbf{b}) \cap \mbb{Z}^n)$ into the space of slack variables.
For a matrix $\mbf{A} \in \mbb{Z}^{m\times n}$ with $\rank(\mbf{A}) = n$ and $\mbf{b} \in \mbb{Z}^m$, define
\begin{equation}\label{eqQ}
\mcf{Q}(\mbf{A}, \mbf{b}) := \conv\left\{\mbf{b}-\mbf{A}\mbf{x}\in\mbb{Z}_{+}^m:~\mbf{x}\in\mbb{Z}^n\right\}.
\end{equation}

\begin{lemma}\label{lem:PQ_U}
Let $\mbf{A} \in \mbb{Z}^{m\times n}$ with $\rank(\mbf{A}) = n$ and $\mbf{b} \in \mbb{Z}^m$.
It holds that
\[
\mcf{Q}(\mbf{A}, \mbf{b}) = \left\{\mbf{b}-\mbf{A}\mbf{x}: \mbf{x}\in\conv(\mcf{P}(\mbf{A}, \mbf{b}) \cap \mbb{Z}^n)\right\}.
\]
%
%
\end{lemma}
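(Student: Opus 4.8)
The plan is to exhibit an affine map that carries the lattice points of $\mcf{P}(\mbf{A}, \mbf{b})$ onto the generators of $\mcf{Q}(\mbf{A}, \mbf{b})$, and then to invoke the fact that an affine map commutes with the operation of taking convex hulls. Define $\phi \colon \mbb{R}^n \to \mbb{R}^m$ by $\phi(\mbf{x}) := \mbf{b} - \mbf{A}\mbf{x}$, which is affine.

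The first step is to rewrite the generating set of $\mcf{Q}(\mbf{A}, \mbf{b})$. Since $\mbf{A}$ and $\mbf{b}$ have integer entries, for every $\mbf{x} \in \mbb{Z}^n$ the vector $\mbf{b} - \mbf{A}\mbf{x}$ is automatically in $\mbb{Z}^m$; hence the requirement $\mbf{b} - \mbf{A}\mbf{x} \in \mbb{Z}_+^m$ reduces to $\mbf{b} - \mbf{A}\mbf{x} \ge \mbf{0}$, that is, $\mbf{A}\mbf{x} \le \mbf{b}$. Combined with $\mbf{x} \in \mbb{Z}^n$, this is exactly the condition $\mbf{x} \in \mcf{P}(\mbf{A}, \mbf{b}) \cap \mbb{Z}^n$. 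Therefore the defining set in \eqref{eqQ} is precisely $\phi(\mcf{P}(\mbf{A}, \mbf{b}) \cap \mbb{Z}^n)$, and so
\[
\mcf{Q}(\mbf{A}, \mbf{b}) = \conv\bigl(\phi(\mcf{P}(\mbf{A}, \mbf{b}) \cap \mbb{Z}^n)\bigr).
\]

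The second step is the identity $\conv(\phi(T)) = \phi(\conv(T))$ for an arbitrary set $T \subseteq \mbb{R}^n$. A point of $\conv(\phi(T))$ is a finite convex combination $\sum_i \lambda_i \phi(\mbf{t}_i)$ with $\mbf{t}_i \in T$; because $\phi$ is affine, this equals $\phi\bigl(\sum_i \lambda_i \mbf{t}_i\bigr)$, and $\sum_i \lambda_i \mbf{t}_i \in \conv(T)$, giving one inclusion. The reverse inclusion follows by the same computation read backwards. Applying this with $T = \mcf{P}(\mbf{A}, \mbf{b}) \cap \mbb{Z}^n$ yields
\[
\mcf{Q}(\mbf{A}, \mbf{b}) = \phi\bigl(\conv(\mcf{P}(\mbf{A}, \mbf{b}) \cap \mbb{Z}^n)\bigr) = \bigl\{\mbf{b} - \mbf{A}\mbf{x} : \mbf{x} \in \conv(\mcf{P}(\mbf{A}, \mbf{b}) \cap \mbb{Z}^n)\bigr\},
\]
which is the claimed equality.

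There is no genuinely hard step here; the result is essentially the affine-invariance of convex hulls. The only point requiring care is that $\mcf{P}(\mbf{A}, \mbf{b}) \cap \mbb{Z}^n$ is typically infinite, since $\mcf{P}(\mbf{A}, \mbf{b})$ is a translated cone. I would therefore state and use the affine-invariance identity for arbitrary (possibly infinite) sets, which is immediate from the definition of the convex hull as the set of all \emph{finite} convex combinations, so that no compactness or finiteness hypothesis on $T$ is needed.
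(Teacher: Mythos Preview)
Your proof is correct and follows essentially the same approach as the paper: the paper's proof is a one-sentence remark that the equation follows from the definitions and the fact that the convex hull operation commutes with affine maps, and your argument is simply a careful unpacking of that sentence.
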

\begin{proof}
    The 
    equation follows from the definitions of $\mcf{Q}(\mbf{A}, \mbf{b})$ and $\conv(\mcf{P}(\mbf{A}, \mbf{b}) \cap \mbb{Z}^n)$, and because the convex hull operation commutes with affine maps.
    %
    %
\qed
\end{proof}
Next, we reformulate $\mcf{Q}(\mbf{A}, \mbf{b})$ as a polyhedron with extra congruency constraints.
In order to do this, we consider $\mbf{U}\in\mbb{Z}^{n\times m}$, $\mbf{R}\in\mbb{Z}^{(m-n)\times m}$ and $\mbf{H} \in \mbb{Z}^{n\times n}$ satisfying $|\det \mbf{H}| = \gcd(\mbf{A})$ and
\begin{equation}\label{eqHNF}
\begin{bmatrix}
    \mbf{U}\\
    \mbf{R}
\end{bmatrix}~\text{is unimodular with}~
\begin{bmatrix}
    \mbf{U}\\
    \mbf{R}
\end{bmatrix}\mbf{A} =
\begin{bmatrix}
    \mbf{H}\\
    \mbf{0}
\end{bmatrix}.
\end{equation}
The matrices $\mbf{U}$, $\mbf{R}$, and $\mbf{H}$ can be found, e.g., by transforming $\mbf{A}$ into Hermite Normal Form; see, e.g.,~\cite[page 48]{AS1986}.

\begin{lemma}\label{lem:reformulation_delta}
Let $\mbf{A} \in \mbb{Z}^{m\times n}$ with $\rank(\mbf{A}) = n$ and $\mbf{b} \in \mbb{Z}^m$.
Let $\mbf{U}\in\mbb{Z}^{n\times m}$, $\mbf{R}\in\mbb{Z}^{(m-n)\times m}$ and $\mbf{H} \in \mbb{Z}^{n\times n}$ satisfy~\eqref{eqHNF}.
It holds that $\mcf{Q}(\mbf{A}, \mbf{b}) = \conv \mcf{R}(\mbf{A}, \mbf{b})$, where
\begin{equation}\label{eqQR_delta}
\mcf{R}(\mbf{A}, \mbf{b}) := \left\{\mbf{y}\in\mbb{Z}_{+}^m:
\begin{array}{@{\hskip 0 cm}r@{\hskip .1cm}c@{\hskip .1cm}l}
\mbf{R}(\mbf{y} -\mbf{b}) &=& \mbf{0}~\text{and}\\
\mbf{U}(\mbf{y}-\mbf{b})&\equiv& \mbf{0} \Mod \mbf{H}\mbb{Z}^n
\end{array}\right\}.
\end{equation}
\end{lemma}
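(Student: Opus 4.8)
The plan is to reduce the claimed identity $\mcf{Q}(\mbf{A},\mbf{b}) = \conv\mcf{R}(\mbf{A},\mbf{b})$ to an equality of the two underlying sets of integer points, and then verify that equality by proving two inclusions. Since $\mcf{Q}(\mbf{A},\mbf{b})$ is by definition the convex hull of the integer set $S := \{\mbf{b}-\mbf{A}\mbf{x} : \mbf{x}\in\mbb{Z}^n\}\cap\mbb{Z}_+^m$, and $\mcf{R}(\mbf{A},\mbf{b})$ is itself a set of integer points of which we take the convex hull, it suffices to prove $S = \mcf{R}(\mbf{A},\mbf{b})$; applying $\conv$ to both sides then yields the lemma. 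Throughout I would abbreviate $\mbf{w} := \mbf{y}-\mbf{b}$ and repeatedly use the two relations $\mbf{U}\mbf{A}=\mbf{H}$ and $\mbf{R}\mbf{A}=\mbf{0}$ that come from \eqref{eqHNF}.

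For the inclusion $S\subseteq\mcf{R}(\mbf{A},\mbf{b})$, I would take $\mbf{y}=\mbf{b}-\mbf{A}\mbf{x}\in\mbb{Z}_+^m$ with $\mbf{x}\in\mbb{Z}^n$, so that $\mbf{w}=-\mbf{A}\mbf{x}$. Then $\mbf{R}\mbf{w}=-\mbf{R}\mbf{A}\mbf{x}=\mbf{0}$ and $\mbf{U}\mbf{w}=-\mbf{U}\mbf{A}\mbf{x}=-\mbf{H}\mbf{x}\in\mbf{H}\mbb{Z}^n$, which are exactly the two conditions defining $\mcf{R}(\mbf{A},\mbf{b})$. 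This direction is routine substitution.

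The substantive direction is $\mcf{R}(\mbf{A},\mbf{b})\subseteq S$, where I must produce an \emph{integer} preimage. Given $\mbf{y}\in\mcf{R}(\mbf{A},\mbf{b})$, the two defining constraints say precisely that $\begin{bmatrix}\mbf{U}\\\mbf{R}\end{bmatrix}\mbf{w}=\begin{bmatrix}\mbf{H}\mbf{z}\\\mbf{0}\end{bmatrix}$ for some $\mbf{z}\in\mbb{Z}^n$. Because $\begin{bmatrix}\mbf{U}\\\mbf{R}\end{bmatrix}$ is a square unimodular (hence $\mbb{Q}$-invertible) matrix, multiplication by it is a bijection of $\mbb{Z}^m$, so membership of $\mbf{w}$ in the lattice $\mbf{A}\mbb{Z}^n$ is equivalent to membership of $\begin{bmatrix}\mbf{U}\\\mbf{R}\end{bmatrix}\mbf{w}$ in $\begin{bmatrix}\mbf{H}\\\mbf{0}\end{bmatrix}\mbb{Z}^n$. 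Concretely, I would set $\mbf{x}:=-\mbf{z}$ and check that $\begin{bmatrix}\mbf{U}\\\mbf{R}\end{bmatrix}\mbf{A}\mbf{x}=\begin{bmatrix}\mbf{H}\\\mbf{0}\end{bmatrix}(-\mbf{z})=\begin{bmatrix}-\mbf{H}\mbf{z}\\\mbf{0}\end{bmatrix}=\begin{bmatrix}\mbf{U}\\\mbf{R}\end{bmatrix}(-\mbf{w})$; cancelling the invertible matrix gives $\mbf{A}\mbf{x}=-\mbf{w}$, that is $\mbf{y}=\mbf{b}-\mbf{A}\mbf{x}$ with $\mbf{x}\in\mbb{Z}^n$ and $\mbf{y}\in\mbb{Z}_+^m$, so $\mbf{y}\in S$.

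The only place requiring care — and the conceptual crux — is this reverse inclusion: the congruence $\mbf{U}(\mbf{y}-\mbf{b})\equiv\mbf{0}\Mod\mbf{H}\mbb{Z}^n$ together with $\mbf{R}(\mbf{y}-\mbf{b})=\mbf{0}$ must be shown to be not merely necessary but \emph{sufficient} for $\mbf{y}-\mbf{b}$ to lie in $\mbf{A}\mbb{Z}^n$. This is exactly what the unimodularity of $\begin{bmatrix}\mbf{U}\\\mbf{R}\end{bmatrix}$ buys us, as it makes $\begin{bmatrix}\mbf{H}\\\mbf{0}\end{bmatrix}\mbb{Z}^n$ the image of $\mbf{A}\mbb{Z}^n$ under a bijection of $\mbb{Z}^m$; once this is in hand, the explicit choice $\mbf{x}=-\mbf{z}$ finishes the argument, and everything else is formal.
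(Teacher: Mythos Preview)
Your proof is correct and follows essentially the same approach as the paper: both establish the two inclusions by direct computation using $\mbf{U}\mbf{A}=\mbf{H}$, $\mbf{R}\mbf{A}=\mbf{0}$, and the invertibility of $\begin{bmatrix}\mbf{U}\\\mbf{R}\end{bmatrix}$. Your choice $\mbf{x}=-\mbf{z}$ is exactly the paper's $\mbf{x}=\mbf{H}^{-1}\mbf{U}(\mbf{b}-\mbf{y})$ written differently, and your preliminary reduction to equality of the underlying integer sets is a slight streamlining but not a different idea.
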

\begin{proof}
To show $\mcf{Q}(\mbf{A}, \mbf{b})\subseteq\conv\mcf{R}(\mbf{A}, \mbf{b})$, let $\mbf{y}:= \mbf{b} - \mbf{A}\mbf{x} \in \mcf{Q}(\mbf{A}, \mbf{b})$, where $\mbf{x} \in \mcf{P}(\mbf{A}, \mbf{b}) \cap \mbb{Z}^n$. 
Note $\mbf{y} \in \mbb{Z}^m_+$.
We have 
\(
\mbf{R}(\mbf{y} -\mbf{b})=\mbf{R}(-\mbf{A}\mbf{x})=\mbf{0}
\)
and
\[
\mbf{U}(\mbf{y}-\mbf{b}) = \mbf{U}(-\mbf{A}\mbf{x})=-\mbf{H}\mbf{x}\in \mbf{H}\mbb{Z}^n.
\]
Therefore, $\mbf{y}\in\mcf{R}(\mbf{A}, \mbf{b})$, and $\mcf{Q}(\mbf{A}, \mbf{b}) \subseteq \conv\mcf{R}(\mbf{A}, \mbf{b})$.

To show $\conv\mcf{R}(\mbf{A}, \mbf{b})\subseteq\mcf{Q}(\mbf{A}, \mbf{b})$, let $\mbf{y}\in\mcf{R}(\mbf{A}, \mbf{b})$.
Set $\mbf{x}:=\mbf{H}^{-1}\mbf{U}(\mbf{b}-\mbf{y})$. 
We have $\mbf{x}\in\mbb{Z}^n$ because $\mbf{U}(\mbf{y}-\mbf{b})\equiv \mbf{0} \Mod \mbf{H}\mbb{Z}^n$.
Also, 
\[
\mbf{b} - \mbf{A}\mbf{x} = \mbf{b} - \mbf{A}\mbf{H}^{-1}\mbf{U}(\mbf{b}-\mbf{y}) = \mbf{b} - \begin{bmatrix}
    \mbf{U}\\
    \mbf{R}
\end{bmatrix}^{-1}
\begin{bmatrix}
    \mbf{U}(\mbf{b}-\mbf{y})\\
    \mbf{0}
\end{bmatrix}.
\]
Given that $\mbf{R}(\mbf{b}-\mbf{y})=\mbf{0}$, we have 
\[
\mbf{b} - \mbf{A}\mbf{x} = \mbf{b} - \begin{bmatrix}
    \mbf{U}\\
    \mbf{R}
\end{bmatrix}^{-1}
\begin{bmatrix}
    \mbf{U}\\
    \mbf{R}
\end{bmatrix}(\mbf{b}-\mbf{y})=\mbf{y}\ge \mbf{0}.
\]
Therefore, $\conv\mcf{R}(\mbf{A}, \mbf{b})\subseteq\mcf{Q}(\mbf{A}, \mbf{b})$.
\qed
\end{proof}
%


%
\begin{remark}\label{remark2}
Lemma~\ref{lem:reformulation_delta} does not require $\mbf{A}$ to be strictly $\Delta$-modular nor $\mbf{b} \in \Span(\mbf{A})$.
However, we can rewrite the conclusion with these assumptions.
If $\mbf{A}$ is strictly $\Delta$-modular, then $\gcd(\mbf{A})=\Delta$ and we can choose $\mbf{H}$ to be a submatrix of $\mbf{A}$.
If $\gcd(\mbf{A})=1$, then we can choose $\mbf{H}=\mbf{I}_n$.
Moreover, if $\gcd(\mbf{A})=1$ and $\mbf{b}\in\mathrm{span}(\mbf{A})$, then $\mcf{Q}(\mbf{A}, \mbf{b}) $ equals
\[
\conv\left\{\mbf{y}\in\mbb{Z}^m_+: \mbf{R}\mbf{y}=\mbf{0}\right\} = \left\{\mbf{y}\in\mbb{R}^m_+: \mbf{R}\mbf{y}=\mbf{0}\right\},
\]
where the equation holds because the right hand side is a rational cone.
Note that every bimodular matrix $\mbf{A}$ is either strictly bimodular or satisfies $\gcd(\mbf{A})=1$.
\hfill $\diamond$
\end{remark}
%

The following results can be found in~\cite{Oxley}.
For $\mbf{X} \in \mbb{R}^{p \times q}$, let $\mbf{X}^\# \in \{0,1\}^{p\times q}$ denote the matrix obtained by replacing every non-zero entry in $\mbf{X}$ by a $1$.
Lemmas~\ref{lemOx641} and~\ref{lemOx1017} are used later to prove that a cographic matroid with a strictly $\Delta$-modular matrix representation has an equivalent node-arc incidence matrix representation up to row operations.

\begin{lemma}[Proposition 6.4.1 in~\cite{Oxley}]\label{lemOx641}
Let $[\mbf{I}_r~\mbf{D}_1]$ and $[\mbf{I}_r~\mbf{D}_2]$ be matrices over fields $\mbb{F}_1$ and $\mbb{F}_2$, respectively, with the columns of each labelled, in order, by $\mbf{f}_1, \dotsc, \mbf{f}_t$.
If the identity map on $\{\mbf{f}_1, \dotsc, \mbf{f}_t\}$ is an isomorphism from $M([\mbf{I}_r~\mbf{D}_1])$ to $M([\mbf{I}_r~\mbf{D}_2])$, then $\mbf{D}_1^\# = \mbf{D}_2^\#$.
\end{lemma}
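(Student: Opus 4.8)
The plan is to prove this via fundamental circuits, exploiting the fact that an $[\mbf{I}_r~\mbf{D}]$ representation makes the circuit structure relative to the ``identity basis'' transparent. First I would observe that since $\mbf{I}_r$ is nonsingular, the set $B := \{\mbf{f}_1, \dotsc, \mbf{f}_r\}$ indexing the identity columns is a basis of both $M([\mbf{I}_r~\mbf{D}_1])$ and $M([\mbf{I}_r~\mbf{D}_2])$. For each index $j$ with $r < j \le t$, I would focus on the fundamental circuit $C(\mbf{f}_j, B)$, namely the unique circuit contained in $B \cup \{\mbf{f}_j\}$.

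The key step is the following claim: for $k \in \{1,2\}$ and each $j > r$, the fundamental circuit satisfies
\[
C(\mbf{f}_j, B) = \{\mbf{f}_j\} \cup \{\mbf{f}_i :\ (\mbf{D}_k)_{i,\,j-r} \neq 0\},
\]
so the circuit is read off exactly from the support of the relevant column of $\mbf{D}_k$. To see this, I would write the $j$th column of $[\mbf{I}_r~\mbf{D}_k]$ in the basis of columns of $\mbf{I}_r$; because that block is the identity, the coefficients are precisely the entries of the corresponding column of $\mbf{D}_k$. Thus $\mbf{f}_j = \sum_i (\mbf{D}_k)_{i,\,j-r}\,\mbf{f}_i$, and the unique minimal linear dependence in $B \cup \{\mbf{f}_j\}$ has support exactly $\{\mbf{f}_j\}$ together with the basis elements whose coefficient is nonzero. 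This minimal dependence is the fundamental circuit, which establishes the claim over either field.

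Finally, since the identity map is an isomorphism from $M([\mbf{I}_r~\mbf{D}_1])$ to $M([\mbf{I}_r~\mbf{D}_2])$ and $B$ is fixed setwise, it carries the fundamental circuit $C(\mbf{f}_j, B)$ computed in the first matroid to the one computed in the second; as both are labelled by the same elements, these circuits coincide as subsets of $\{\mbf{f}_1, \dotsc, \mbf{f}_t\}$. Invoking the claim for $k = 1$ and $k = 2$, the two circuits have the same intersection with $B$, so the support of each column of $\mbf{D}_1$ matches that of the correspondingly labelled column of $\mbf{D}_2$, giving $\mbf{D}_1^\# = \mbf{D}_2^\#$. The main obstacle is the central claim identifying the column support with the fundamental circuit; once that is proven, the conclusion is immediate because circuits are preserved under matroid isomorphism, and the argument is insensitive to the choice of fields $\mbb{F}_1$ and $\mbb{F}_2$ precisely because fundamental circuits are a purely matroidal notion.
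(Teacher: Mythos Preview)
The paper does not give its own proof of this lemma; it is quoted as Proposition~6.4.1 from Oxley and used as a black box. Your argument via fundamental circuits with respect to the identity basis is correct and is precisely the standard proof one finds in Oxley: the $(i,j)$ entry of $\mbf{D}_k$ is nonzero if and only if $\mbf{f}_i$ lies in $C(\mbf{f}_{r+j},B)$, and since fundamental circuits are matroid invariants preserved by the identity isomorphism, the supports agree.
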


The following lemma originates from~\cite{C1963}.

\begin{lemma}[Lemma 10.1.7 in~\cite{Oxley}]\label{lemOx1017}
Let $\mbf{D}_1$ and $\mbf{D}_2$ be TU matrices. 
If $\mbf{D}_1^{\#} = \mbf{D}_2^{\#}$, then $\mbf{D}_2$ can be obtained from $\mbf{D}_1$ by multiplying some rows and columns by $-1$.
\end{lemma}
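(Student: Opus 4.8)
The plan is to recast the required $\pm1$ row and column scalings as a signing (potential) problem on the bipartite support graph, and to use total unimodularity to certify that this problem is solvable. Since $\mbf{D}_1$ and $\mbf{D}_2$ are TU, every entry lies in $\{0,\pm1\}$, and $\mbf{D}_1^\# = \mbf{D}_2^\#$ means they share a common support. On each support position $(i,j)$ I define $c_{ij} := (\mbf{D}_2)_{ij}/(\mbf{D}_1)_{ij}\in\{\pm1\}$. The conclusion is then equivalent to producing row signs $\sigma_i\in\{\pm1\}$ and column signs $\tau_j\in\{\pm1\}$ with $\sigma_i\tau_j = c_{ij}$ on every support position, since then $\mbf{D}_2 = \diag(\sigma)\,\mbf{D}_1\,\diag(\tau)$.

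First I would form the bipartite graph $B$ whose vertex classes are the row and column indices, with an edge $\{i,j\}$ for each nonzero position; this graph is common to both matrices. On each connected component, I choose a spanning tree, set the sign of a root vertex to $+1$, and propagate signs along tree edges via $\sigma_i\tau_j = c_{ij}$, which determines $\sigma$ and $\tau$ uniquely. Multiplying the relations $\sigma_i\tau_j = c_{ij}$ around any cycle makes each vertex sign appear squared, so the propagated signing is globally consistent if and only if $\prod_{e\in C} c_e = 1$ for every cycle $C$ of $B$. It suffices to verify this for \emph{chordless} cycles: a chord splits a cycle into two shorter ones sharing that chord, and the chord entry contributes its square $=1$ to the product, so the general cycle condition follows by induction on the number of chords.

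The heart of the argument is deriving the chordless-cycle condition from total unimodularity. A chordless cycle $C$ has even length $2k$ and uses $k$ rows $R$ and $k$ columns $S$; chordlessness forces the submatrix $\mbf{D}[R,S]$ to be nonzero \emph{exactly} on the edges of $C$, so after reordering it is a $k\times k$ matrix $\mbf{M}$ whose only nonzero entries are $M_{\ell,\ell}$ and $M_{\ell,\ell+1}$ (indices read cyclically). In the Leibniz expansion only the identity and the single $k$-cycle permutation contribute, giving $\det\mbf{M} = \prod_\ell M_{\ell,\ell} + (-1)^{k-1}\prod_\ell M_{\ell,\ell+1}$. Each product is $\pm1$, and TU forces $|\det\mbf{M}|\le1$; since a sum of two odd numbers is even, $\det\mbf{M}=0$, whence $\prod_\ell M_{\ell,\ell} = (-1)^k\prod_\ell M_{\ell,\ell+1}$. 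Multiplying these two products shows that the product of all entries of $\mbf{D}$ around $C$ equals $(-1)^k$, a value depending only on the cycle length. Applying this to $\mbf{D}_1$ and $\mbf{D}_2$ gives $\prod_{e\in C}(\mbf{D}_1)_e = (-1)^k = \prod_{e\in C}(\mbf{D}_2)_e$, hence $\prod_{e\in C} c_e = 1$, as required.

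I expect the main obstacle to be this last step: verifying that a chordless cycle yields a submatrix with exactly the cyclic support, and that its determinant collapses to the two displayed terms, so that TU pins down the cycle product independently of the particular signing. Once this is established, the spanning-tree construction and the identification $\mbf{D}_2 = \diag(\sigma)\,\mbf{D}_1\,\diag(\tau)$ are routine.
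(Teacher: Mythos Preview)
The paper does not prove this lemma; it is quoted verbatim from Oxley's textbook and attributed originally to Camion~\cite{C1963}, with no argument given. So there is no ``paper's own proof'' to compare against.

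Your argument is correct and is essentially the classical Camion signing proof. The reduction to a potential problem on the bipartite support graph is exactly right, and your key step---that for a chordless cycle on $k$ rows and $k$ columns the induced $k\times k$ submatrix has determinant $\prod_\ell M_{\ell,\ell} + (-1)^{k-1}\prod_\ell M_{\ell,\ell+1}$, which TU forces to vanish, pinning the product of all $2k$ cycle entries to $(-1)^k$---is the standard and correct way to extract the cycle condition from total unimodularity. The induction from chordless cycles to arbitrary cycles via splitting along a chord is also fine: if $C = C_1 \triangle C_2$ with $C_1 \cap C_2 = \{e\}$, then $\prod_{C_1} c \cdot \prod_{C_2} c = \prod_C c \cdot c_e^2 = \prod_C c$. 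One small point you might make explicit when writing it up: the consistency you actually need is on the \emph{fundamental} cycles of your spanning tree (one per non-tree edge), and these need not be chordless, so the chord-splitting induction is doing real work rather than being an aside.
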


Lemma 13 in~\cite{CFHW2022} gives a compact extended formulation for the uncapacitated odd parity constrained circulation problem, which is used to give a compact extended formulation for certain bimodular-constrained IPs related to the stable set polytope.
We give a generalized result to hold for more general congruency constraints.

\begin{lemma}\label{lem:compact_ef_delta}
Let $\mbf{D}$ be the node-arc incidence matrix of a directed graph $H = (V,A)$, let $d \in \mbb{Z}_+$ and $\mbf{H} \in \mbb{Z}^{d\times d}$ with $\Delta = |\det \mbf{H}|$, let $\mbf{W} \in \mbb{Z}^{d\times A}$, and let $\mbf{f} \in \mbb{Z}^d$.
Consider
\[
\mcf{S} := \conv\left\{\mbf{y}\in\mbb{Z}_{+}^A: \mbf{D}\mbf{y}= \mbf{0}~\text{and}~\mbf{W}\mbf{y}\equiv \mbf{f}\Mod \mbf{H} \mbb{Z}^{d}\right\},
\]
which is the convex hull of nonnegative integer circulations in $H$ with a congruency constraint over $\mbf{H}\mbb{Z}^d$.
Then $\mcf{S}$ admits an extended formulation of size in $O(\Delta^{\Delta+1}|A|\cdot |V|^{\Delta-1})$ that can be constructed in polynomial time.
\end{lemma}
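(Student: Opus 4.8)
The plan is to convert the congruency constraint into a \emph{voltage} (group-labeling) condition on circulations and then to write $\mcf{S}$ as the convex hull of a polynomial family of network-flow polyhedra on a finite cover of $H$. Set $G := \mbb{Z}^d/\mbf{H}\mbb{Z}^d$, a finite abelian group with $|G| = |\det\mbf{H}| = \Delta$; assign to each arc $a$ the voltage $\gamma_a := \mbf{W}\mbf{e}_a + \mbf{H}\mbb{Z}^d \in G$ and write $\bar{\mbf f}$ for the class of $\mbf{f}$. A nonnegative integer vector $\mbf{y}$ with $\mbf{D}\mbf{y} = \mbf{0}$ is exactly a nonnegative integer circulation in $H$, and $\mbf{W}\mbf{y}\equiv\mbf{f}\Mod\mbf{H}\mbb{Z}^d$ becomes the single group equation $\sum_a y_a\gamma_a = \bar{\mbf f}$ in $G$. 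Thus $\mcf{S}$ is the convex hull of nonnegative integer circulations of total voltage $\bar{\mbf f}$.

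Next I would analyze the vertices of $\mcf{S}$. Writing any circulation as a sum of directed circuits $D_1,\dots,D_m$ (with multiplicity), the voltage of $\mbf{y}$ is $\sum_l\gamma(D_l)$, where $\gamma(D_l)=\sum_{a\in D_l}\gamma_a$. A direction $\mbf{r}$ lies in the recession cone of $\mcf{S}$ iff it is a nonnegative circulation of voltage $0$; since any circuit $C$ satisfies that $\mathrm{ord}(\gamma(C))\cdot C$ is a voltage-$0$ circulation, the cone generated by voltage-$0$ circulations is the \emph{entire} circulation cone $\{\mbf{r}\ge\mbf{0}:\mbf{D}\mbf{r}=\mbf{0}\}$, which is therefore the recession cone of $\mcf{S}$. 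Consequently a point of $\mcf{S}$ can be a vertex only if its underlying circulation has no nonempty sub-multiset of circuits with voltage sum $0$, since otherwise one could peel off a recession direction. By the Davenport-constant bound $D(G)\le|G|=\Delta$, a zero-sum-free sequence in $G$ has length at most $\Delta-1$, so every vertex of $\mcf{S}$ decomposes into at most $\Delta-1$ circuits, each of nonzero voltage.

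This structural bound drives a disjunctive description. I would index \emph{types} $\sigma=(m;\rho_1,\dots,\rho_m;v_1,\dots,v_m)$ by a length $m\le\Delta-1$, a sequence of nonzero voltages $\rho_l\in G$ with $\sum_l\rho_l=\bar{\mbf f}$, and a base vertex $v_l\in V$ for each circuit; there are $O(\Delta^{\Delta-1}|V|^{\Delta-1})$ such types. For a fixed type, let $P_\sigma$ be the convex hull of all circulations that split into $m$ circuits, the $l$-th passing through $v_l$ and having voltage $\rho_l$, plus any voltage-$0$ circulation. I realize $P_\sigma$ as the projection to $\mbb{R}^A$ of a network-flow region on the cover graph $\hat H$ with node set $V\times G$ and an arc $(u,g)\to(w,g+\gamma_a)$ for each arc $a=(u,w)$ of $H$ and each $g\in G$: a circuit of $H$ through $v_l$ of voltage $\rho_l$ lifts to a path in $\hat H$ from $(v_l,\mbf{0})$ to $(v_l,\rho_l)$, which I close with an auxiliary return arc carrying exactly one unit, while voltage-$0$ circulations lift to genuine circulations of $\hat H$ whose projection is, as above, the full circulation cone. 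The flow region is cut out by the node-arc incidence matrix of $\hat H$ augmented by $m$ unit-flow return arcs, hence is an integral polyhedron, so $P_\sigma$ has an extended formulation of size $O(\Delta(|A|+|V|))$.

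Finally, the integer points of $\bigcup_\sigma P_\sigma$ are exactly the voltage-$\bar{\mbf f}$ circulations, since each vertex of $\mcf{S}$ matches some type while every feasible point is a vertex plus a recession direction contained in every $P_\sigma$, so $\mcf{S}=\conv\bigl(\bigcup_\sigma P_\sigma\bigr)$. Applying Balas' extended formulation for the convex hull of a union of polyhedra then yields an extended formulation of size $\sum_\sigma O(\Delta(|A|+|V|))=O(\Delta^{\Delta+1}|A|\,|V|^{\Delta-1})$, and every ingredient---the group $G$, the cover $\hat H$, the enumeration of types, the per-type flow formulations, and the Balas composition---is computable in polynomial time for fixed $\Delta$. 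I expect the main obstacle to be the two structural claims that make the disjunction finite and correct: identifying the recession cone with the full circulation cone (so that voltage-$0$ lifts suffice as recession directions) and bounding the number of circuits in a vertex via the zero-sum-free argument, together with verifying that the cover-graph flow region projects exactly onto $P_\sigma$ with the right integrality.
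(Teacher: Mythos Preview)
Your proposal is correct and follows essentially the same strategy as the paper: both construct the $\Delta$-fold cover of $H$ on $V\times(\mbb{Z}^d/\mbf{H}\mbb{Z}^d)$, index disjuncts by zero-sum-free voltage sequences summing to $\bar{\mbf f}$ together with a base vertex per circuit (the paper's pairs $\tau|\rho$), realize each disjunct as a projected network-flow polyhedron on the cover, and then combine via Balas. Your voltage/Davenport-constant language and the single-flow-with-return-arcs implementation are cosmetic variants of the paper's explicit coset enumeration and Minkowski sum of $k$ unit flows; the one point to be careful with is that your cover-flow region may project to something slightly larger than your stated $P_\sigma$ (paths need not match source $l$ to sink $l$), but since the projection still lands inside $\mcf{S}$ and contains every vertex of $\mcf{S}$ for some $\sigma$, the argument goes through unchanged.
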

\begin{proof}
There are vectors $\mbf{g}^0 := \mbf{0},\mbf{g}^1, \dotsc, \mbf{g}^{\Delta-1}$ in $\mbb{Z}^d \cap \mbf{H} [0,1)^d$ that represent the cosets of $\mbb{Z}^d / \mbf{H} \mbb{Z}^d$; see, e.g.,~\cite{AS1986}.
Construct the directed graph $H' = (V', A')$ with
%
\[
\begin{array}{@{\hskip .1 cm}r@{\hskip .1 cm}c@{\hskip .1 cm}l}
V' &:= & \bigcup_{i=0}^{\Delta-1} \left\{(v,\mbf{g}^i): v\in V\right\} \quad\text{and}\\[.25cm]
A'&:= & \left\{\left((v, \mbf{g}^i),(w, \mbf{g}^j)\right) : \begin{array}{@{\hskip 0 cm}l@{\hskip 0 cm}}
(v,w)\in A,\\
\mbf{g}^j  \equiv (\mbf{g}^i + \mbf{W}_{(v,w)}) \Mod \mbf{H}\mbb{Z}^d
\end{array}\right\},
\end{array}
\]
where $\mbf{W}_{(v,w)}$ is the column of $\mbf{W}$ corresponding to $(v,w) \in A$.
Note that $|V'| = \Delta \cdot |V|$ and $|A'| = \Delta \cdot |A|$ because, for each $(v,w)\in A$ and $i\in\{0,1,\dots,\Delta-1\}$, there is a unique $\mbf{g}^j$ congruent to $\mbf{g}^i + \mbf{W}_{(v,w)}$.
Let $\mbf{D}'$ be the node-arc incidence matrix of $H'$.

Let $\Omega$ denote the set of subsequences of $(\mbf{g}^i)_{i=1}^{\Delta-1}$ whose sum is congruent to $\mbf{f}$ and with no zero-sum subsequence:
\[
\Omega := \left\{(\mbf{g}^{t_i})_{i=1}^k:
\begin{array}{@{\hskip 0 cm}l@{\hskip .1 cm}}
k \ge 1, t_1, \dotsc, t_k \in \{1, \dotsc, \Delta-1\},\\
\sum_{i =1 }^k\mbf{g}^{t_i} \equiv \mbf{f} \Mod \mbf{H}\mbb{Z}^d,\\ \text{and}~\not\exists~\varnothing \subsetneq I \subseteq \{1, \dotsc, k\}~\text{such that}\\
\sum_{i \in I}\mbf{g}^{t_i} \equiv \mbf{0} \Mod \mbf{H}\mbb{Z}^d
\end{array}
\right\}.
\]
There are $\Delta-1$ non-zero cosets of $\mbb{Z}^d / \mbf{H}\mbb{Z}^d$, so if $(\mbf{g}^{t_i})_{i=1}^k \in \Omega$, then $k \le \Delta-1$. 
Thus, 
\(
|\Omega| \le \bar{f}(\Delta) := (\Delta-1)^{\Delta-1}.
\)

Given a sequence $\rho = (\mbf{g}^{t_i})_{i=1}^k\in \Omega$, consider a sequence $\tau = (v_i)_{i=1}^k$ of nodes in $V$.
Throughout the proof, we use the notation $\tau|\rho$ to denote a sequence $\omega = (\mbf{g}^{t_i})_{i=1}^k\in \Omega$ and a node sequence $\tau = (v_i)_{i=1}^k$ of the same length.  

For $\tau|\rho$, let $\mcf{T}_{\tau|\rho}$ denote the Minkowski sum of the $k$ convex hulls of uncapacitated unit flows in $H'$ from $(v_1, \mbf{0})$ to $(v_1,\mbf{g}^{i_1})$, from $(v_2,\mbf{g}^{i_1})$ to $(v_2,\mbf{g}^{i_1}+\mbf{g}^{i_2})$, and so on up to flow from $(v_k,\sum_{j=1}^{k-1}\mbf{g}^{i_j})$ to $(v_k,\sum_{j=1}^k\mbf{g}^{i_j})$:
\[
\mcf{T}_{\tau|\rho} = \left\{\mbf{x}\in\mathbb{R}^{A'}: 
\begin{array}{@{\hskip 0 cm}r@{\hskip .1 cm}c@{\hskip .1 cm}l@{\hskip 0 cm}}
\multicolumn{3}{@{\hskip 0 cm}l}{\vphantom{\mbf{f}_{v_j}^{k}}\exists~\mbf{x}_1, \dotsc, \mbf{x}_k \ge \mbf{0}~\text{such that}}\\
\mbf{D}'\mbf{x}_j &=& \mbf{f}_{v_j}^{k} ~\forall ~j \in \{1, \dotsc, k\},~\text{and}\\
\mbf{x}&=&\sum_{j=1}^k\mbf{x}_j
\end{array}\right\},
\]
where $\mbf{f}_{v_j}^{k} =\mbf{e}_{(v_j, \sum_{\ell=1}^{j-1}\mbf{g}^{i_{\ell}})} - \mbf{e}_{(v_j, \sum_{\ell=1}^j\mbf{g}^{i_{\ell}})} \in \mbb{R}^{V'}$.

The previous equation demonstrates that $\mcf{T}_{\tau|\rho}$ has an extended formulation in the space of variables $(\mbf{x}, \mbf{x}_1,\dots,\mbf{x}_k)$ that can be described by $k \cdot |A'|$ inequalities and $k\cdot|V'| + |A'|$ equations.
Using Balas's result on the union of polyhedra (\cite{B1985}), the convex hull $\mathcal{T}$ of the union of all $\mathcal{T}(\tau|\rho)$ can be written as:
\[
\begin{array}{r@{\hskip .1 cm}c@{\hskip .1 cm}l}
\mcf{T} &:=& \conv\left(\bigcup_{\tau|\rho} \mcf{T}_{\tau|\rho}\right)\\
& = & 
\left\{\mbf{x}\in\mathbb{R}^{A'}:
\begin{array}{@{\hskip 0 cm}r@{\hskip .1 cm}l@{\hskip .1 cm}c@{\hskip .1 cm}ll@{\hskip .1 cm}}
\multicolumn{5}{@{\hskip 0 cm}l}{\exists~\mbf{x}_{\tau|\rho}, \lambda_{\tau|\rho},~\text{and}~\mbf{x}_{j,\tau|\rho},}\\
\multicolumn{5}{@{\hskip 0 cm}l}{\forall~\tau|\rho~\text{and}~j \in \{1, \dotsc, k\},~\text{such that}}\\
\sum_{\tau|\rho}&\mbf{x}_{\tau|\rho} & = & \mbf{x},\\
\sum_{j=1}^k&\mbf{x}_{j,\tau|\rho}& = & \mbf{x}_{\tau|\rho} & \forall~\tau|\rho\\[.1cm]
&\mbf{D}'\mbf{x}_{j,\tau|\rho} &=& \lambda_{\tau|\rho}\mbf{f}_{v_j}^k & \forall~\tau|\rho,j\\[.1cm]
\sum_{\tau|\rho}&\lambda_{\tau|\rho} &=& 1,\\[.1cm]
\multicolumn{4}{l}{\text{and}~\mbf{x}_{j,\tau|\rho}\ge \mbf{0},~\lambda_{\tau|\rho}\ge0}& \forall~\tau|\rho,j\\[.1cm]
\end{array}
\right\}.
\end{array}
\]
The previous formulation has $1+k\cdot|A'|$ inequalities for each $\tau|\rho$.
%
There are at most $\bar{f}(\Delta)\cdot|V|^{\Delta-1}$ choices of $\tau|\rho$, so $\mcf{T}$ has an extended formulation of size at most
\[
\begin{array}{@{\hskip 0 cm}r@{\hskip .1 cm}l}
& \bar{f}(\Delta)|V|^{\Delta-1} (1+\Delta|A'|)
\in O(\Delta^{\Delta+1}|A|\cdot |V|^{\Delta-1}).
\end{array}
\]
Thus, it suffices to prove $\mathcal{S}$ is a projection of $\mathcal{T}$ via
\[
\textstyle
\pi(\mbf{x})_{(v,w)} := \sum_{i=0}^{\Delta-1}\mbf{x}_{((v,\mbf{g}^i), (w, (\mbf{g}^i+\mbf{W}_{(v,w)})\Mod \mbf{H}\mbb{Z}^d))}.
\]

For each $\tau|\rho$, we have 
\[
\rec(\mcf{S}) = \rec(\pi(\mcf{T}_{\tau|\rho})) = \left\{\mbf{y}\in\mathbb{R}^A_+:~\mbf{D}\mbf{y}=\mbf{0}\right\},
\]
where $\rec(\cdot)$ denotes the recession cone.
Also, $\rec(\pi(\mcf{T})) = \cap_{\tau|\rho}\rec(\pi(\mcf{T}_{\tau|\rho}) $, so $\rec(\mcf{S}) = \rec(\pi(\mcf{T}))$.
Hence, we only need to show that the vertices of $\pi(\mathcal{T})$ are contained in $\mcf{S}$ and the vertices of $\mathcal{S}$ are contained in $\pi(\mathcal{T})$.
    
Let $\mbf{y}$ be a vertex of $\pi(\mathcal{T})$. 
Hence, $\mbf{y}$ is the image of a vertex $\mbf{z}$ of some $\mcf{T}_{\tau|\rho}$. 
As $\mbf{z}$ is a vertex of $\mcf{T}_{\tau|\rho}$, we know $\mbf{z} = \sum_{j=1}^k\mbf{z}_j$, where each $\mbf{z}_j$ is a unit flow from $(v_j,\sum_{\ell=1}^{j-1}\mbf{g}^{i_{\ell}})$ to $(v_j,\sum_{\ell=1}^j\mbf{g}^{i_{\ell}})$.
If $\mbf{z}_j$ is not integer-valued, then it is the convex combination of other unit flows from $(v_j,\sum_{\ell=1}^{j-1}\mbf{g}^{i_{\ell}})$ to $(v_j,\sum_{\ell=1}^j\mbf{g}^{i_{\ell}})$; however, this contradicts that $\mbf{z}$ is a vertex.
Hence, $\mbf{y}:=\pi(\mbf{z})$ is the sum of $k$ nonnegative circulations in $H$, so it is a nonnegative circulation. 
Also, $\sum_{\ell=1}^k\mbf{g}^{i_{\ell}}\equiv \mbf{f} \Mod \mbf{H}\mbb{Z}^d$.
Thus, $\pi(\mathcal{T})\subseteq \mcf{S}$.

Let $\mbf{y}$ be a vertex of $\mcf{S}$. 
Hence, $\mbf{y}$ is a nonnegative integer circulation in $H$ with $\mbf{W}\mbf{y}\equiv \mbf{f}\Mod \mbf{H}\mbb{Z}^{d}$.
Also, $\mbf{y}$ can be decomposed into directed cycles in $H$, say $\mbf{y}=\sum_{i=1}^k\mbf{y}_i$, where each $\mbf{y}_i$ satisfies $\mbf{y}_i\ge \mbf{0}, \mbf{D}\mbf{y}_i=\mbf{0}$, and $\mbf{y}_i\in \{0,1\}^A$.
There is no zero-sum subsequence of $(\mbf{y}_i)_{i=1}^k$ because $\mbf{y}$ is a vertex of $\mcf{S}$, i.e., there is no $\varnothing \subsetneq I \subseteq \{1, \dotsc, k\}$ such that $\mbf{y}_0:=\sum_{i\in I}\mbf{y}_i$ satisfies $\mbf{W}\mbf{y}_0 \equiv \mbf{0} \Mod \mbf{H}\mbb{Z}^d$.
Otherwise, $\mbf{y} \pm \mbf{y}^0 \in \mcf{S}$ and $\mbf{y} = \frac12(\mbf{y} - \mbf{y}_0) + \frac12(\mbf{y}+\mbf{y}_0)$, implying that $\mbf{y}$ is not a vertex.
Therefore, $(\mbf{W}\mbf{y}_i\Mod \mbf{H}\mbb{Z}^{d})_{i=1}^k \in \Omega$, and $(\mbf{y}_i)_{i=1}^k$ is the projection under $\pi$ of $k$ paths from $(v_1, \mbf{0})$ to $(v_1,\mbf{g}^{i_1})$, from $(v_2,\mbf{g}^{i_1})$ to $(v_2,\mbf{g}^{i_1}+\mbf{g}^{i_2})$, and so on up to from $(v_k,\sum_{j=1}^{k-1}\mbf{g}^{i_j})$ to $(v_k,\sum_{j=1}^k\mbf{g}^{i_j})$, where each $v_j$ is in the cycle $\mbf{y}_j$.
Hence, $\mbf{y}=\sum_{j=1}^k\mbf{y}_j\in\pi(\mcf{T}_{\tau|\rho})$.
Thus, $\mathcal{S}\subseteq \pi(\mathcal{T})$.
\qed
\end{proof}

\section{A proof of Theorem~\ref{thmMain}.}\label{secProof}
%
Suppose $\mbf{A} \overline{\mbf{x}} = \mbf{b}$ for $\overline{\mbf{x}} \in \mbb{R}^n$.
By Lemma \ref{lem:rational_cone}, if $\overline{\mbf{x}}\in\mbb{Z}^n$, then $\conv(\mcf{P}(\mbf{A}, \mbf{b}) \cap \mbb{Z}^n)$ has an extended formulation of size in $O(n^2)$.
Thus, for the remaining proof, we assume $\overline{\mbf{x}}\notin\mbb{Z}^n$.

As $\mbf{A}$ is strictly $\Delta$-modular of full-rank, there exists $B \subseteq \{1, \dotsc, m\}$ be such that $|\det \mbf{A}_B| = \Delta$; $B$ can be found in polynomial time, e.g., using Gaussian elimination.
We can reorder the rows and columns of $\mbf{A}$ to assume that
\[
\mbf{A} = \begin{bmatrix}
    \mbf{A}_B \\
    \mbf{A}_N
\end{bmatrix} ~\text{with}~
\begin{bmatrix}
    \mbf{I}_n & \mbf{0}\\
    -\mbf{A}_N\mbf{A}_B^{-1} & \mbf{I}_{m-n}
\end{bmatrix}\mbf{A} =
\begin{bmatrix}
    \mbf{A_B}\\
    \mbf{0}
\end{bmatrix}.
\]

As $|\det \mbf{A}_B| = \Delta$, there are $\Delta-1$ nonzero cosets of $\mbb{Z}^n /\mbf{A}_B\mbb{Z}^{n}$; denote them by $\mbf{g}^i$ for $i \in \{1, \dotsc, \Delta-1\}$.
Given that $\mbf{b}_B = \mbf{A}_B \overline{\mbf{x}}\in\mbb{Z}^n$ and $\overline{\mbf{x}}\notin\mbb{Z}^n$, there is some $i^* \in \{1, \dotsc, \Delta-1\}$ such that $\mbf{b}_B\equiv \mbf{g}^{i^*} \Mod \mbf{A}_B \mbb{Z}^n$.

By Lemma \ref{lem:PQ_U}, $\mcf{Q}(\mbf{A}, \mbf{b})$ and $\conv(\mcf{P}(\mbf{A}, \mbf{b}) \cap \mbb{Z}^n)$ are affinely equivalent. 
Thus, it suffices to show $\mcf{Q}(\mbf{A}, \mbf{b})$ has a compact extended formulation.
Note that $\mbf{A}_n\mbf{A}_B^{-1}$ is TU because $\mbf{A}_B$ is a basis of the strictly $\Delta$-modular matrix $\mbf{A}$.
Thus, $[-\mbf{A}_n\mbf{A}_B^{-1}~\mbf{I}_{m-n}]$ is TU.
Apply Lemma~\ref{lem:reformulation_delta} with $\mbf{U} = [\mbf{I}_n~\mbf{0}]$, $\mbf{R} =  [-\mbf{A}_n\mbf{A}_B^{-1}~\mbf{I}_{m-n}]$, and $\mbf{H} = \mbf{A}_B$.
As $\mbf{b}\in\mathrm{span}(\mbf{A})$ and $\mbf{b}_B\equiv \mbf{g}^{i^*} \Mod \mbf{A}_B \mbb{Z}^n$, we have
\[
\mcf{Q}(\mbf{A}, \mbf{b}) = \conv\left\{\mbf{y}\in\mbb{Z}_{+}^m:
\begin{array}{l@{\hskip .1 cm}}
\begin{bmatrix}
    -\mbf{A}_N\mbf{A}_B^{-1}~ \mbf{I}_{m-n}
\end{bmatrix}\mbf{y}= \mbf{0}\\
\text{and}~\mbf{y}_B\equiv \mbf{g}^{i^*} \Mod \mbf{A}_B \mbb{Z}^n
\end{array}
\right\}.
\]

Let $M = M(\mbf A^\top)$.
The dual matroid $M^*$ represented by $\begin{bmatrix}
    -\mbf{A}_N\mbf{A}_B^{-1}~ \mbf{I}_{m-n}
\end{bmatrix}$ is graphic because $M$ is cographic.

We claim that there exists a node-arc incidence matrix $\mbf{D}\in\{0,\pm 1\}^{(m-n+1)\times m}$ such that
\[
\mcf{Q}(\mbf{A}, \mbf{b}) = \conv\left\{\mbf{y}\in\mbb{Z}_{+}^m:
\begin{array}{l}\mbf{D}\mbf{y}= \mbf{0}\\
\text{and}~\mbf{y}_B\equiv \mbf{g}^{i^*} \Mod \mbf{A}_B \mbb{Z}^n
\end{array}\right\}.
\]
Lemma~\ref{lem:compact_ef_delta} will then imply that $\mcf{Q}(\mbf{A}, \mbf{b})$ has a compact extended formulation of size in $O((m-n+1)^{\Delta-1} m)$.
We have $m\in O(n)$ because $M$ is cographic~\cite[pg. 145]{Oxley}.
Thus, $O((m-n+1)^{\Delta -1}m) \subseteq O(n^{\Delta})$.

To prove the claim, it suffices to argue that there exists such a $\mbf{D}$ for which, given $\mbf{y}\in\mathbb{R}^m$, we have $\mbf{D}\mbf{y}=\mbf{0}$ if and only if $\begin{bmatrix} -\mbf{A}_N\mbf{A}_B^{-1}~ \mbf{I}_{m-n}
\end{bmatrix}\mbf{y}=\mbf{0}$.
%

Let $H$ be a graph such that 
\[
M^*=M(H) = M([-\mbf{A}_N\mbf{A}_B^{-1}~ \mbf{I}_{m-n}]).
\]
We can assume $M(H) = M(\mbf{E})$, where
\[
\begin{bmatrix}
    \mbf{E}\\
    -\mbf{1}^\top \mbf{E}
\end{bmatrix}\in\{0,\pm 1\}^{(m-n+1)\times m}
\]
is the node-arc incidence matrix of $H$ with arbitrary orientation. 
The row sum of a node-arc incidence matrix is $\mbf{0}$, so the last row is $-\mbf{1}^\top \mbf{E}$.
The matrix $\mbf{E}$ can be found in polynomial time; see, e.g., \cite[page 326]{bixby1980converting}.
Suppose $\mbf{E}=[\mbf{E}_1~\mbf{E}_2]$, where $\mbf{E}_2\in\mbb{Z}^{(m-n)\times(m-n)}$. 
The matrix $\mbf{E}_2$ is invertible because the last $m-n$ elements of $M^*$ form a basis. 
Thus, $[\mbf{E}_2^{-1}\mbf{E}_1~\mbf{I}_{m-n}]$ is a TU representation of $M^*$. 
By Lemma~\ref{lemOx641}, $(\mbf{E}_2^{-1}\mbf{E}_1)^\# = (-\mbf{A}_N\mbf{A}_B^{-1})^\#$.
By Lemma~\ref{lemOx1017}, $\mbf{E}_2^{-1}\mbf{E}_1$ can be obtained from $-\mbf{A}_N\mbf{A}_B^{-1}$ by negating some rows and columns, say $\mbf{P}_1(-\mbf{A}_N\mbf{A}_B^{-1})\mbf{P}_2 = \mbf{E}_2^{-1}\mbf{E}_1$, where $\mbf{P}_1$ and $\mbf{P}_2$ are diagonal matrices with entries in $\{-1, 1\}$. 

Consider the node-arc incidence matrix
\[
\mbf{D}:=\begin{bmatrix}
    \mbf{E}\\
    -\mbf{1}^\top \mbf{E}
\end{bmatrix}
\left[\begin{array}{ll}
    \mbf{P}_2^{-1} & \mbf{0} \\
    \mbf{0} & \mbf{P}_1
\end{array}\right].
\]
For $\mbf{y}\in\mathbb{R}^m$, the equation $\mbf{D}\mbf{y}=\mbf{0}$ holds if and only if $\begin{bmatrix} -\mbf{A}_N\mbf{A}_B^{-1}~ \mbf{I}_{m-n}
\end{bmatrix}\mbf{y}=\mbf{0}$ because
\[
\begin{array}{rcl}
\mbf{E}
\left[\begin{array}{ll}
    \mbf{P}_2^{-1} & \mbf{0} \\
    \mbf{0} & \mbf{P}_1
\end{array}\right]
&=&\begin{bmatrix}
    \mbf{E}_1 \mbf{P}_2^{-1}~ \mbf{E}_2\mbf{P}_1
\end{bmatrix}\\
&=&\begin{bmatrix}
    \mbf{E}_2 \mbf{P}_1(-\mbf{A}_{N}\mbf{A}_B^{-1})~\quad \mbf{E}_2\mbf{P}_1
\end{bmatrix}\\[.1cm]
&=&
\mbf{E}_2\mbf{P}_1\begin{bmatrix}
    -\mbf{A}_N\mbf{A}_B^{-1}~ \mbf{I}_{m-n}
\end{bmatrix},
\end{array}
\]
and $\mbf{E}_2\mbf{P}_1$ is invertible.
\qed

\section{Necessity of Conditions \emph{\ref{Assume2}} and \emph{\ref{Assume3}}}\label{secExamples}

\subsection{Necessity of Condition \emph{\ref{Assume2}}}\label{secCevallos}

Cevallos et al.~\cite{CWZ2018} demonstrate that there exists a polyhedron $\mcf{P}(\mbf{A}, \mbf{b})$ such that $\mbf{A}$ is strictly bimodular and $\mbf{b} \in \Span(\mbf{A})$, yet $\conv(\mcf{P}(\mbf{A}, \mbf{b}) \cap \mbb{Z}^n)$ does not have a compact extended formulation; see Section 5.8 in~\cite{CWZ2018}.
In this subsection, we observe that their example satisfies all conditions in Theorem~\ref{thmMain} except \emph{\ref{Assume2}}; this highlights the necessity of \emph{\ref{Assume2}} for the statement to hold true.
Moreover, we show that $M(\mbf{A}^\top)$ is graphic.
Hence, we cannot replace \emph{\ref{Assume2}} in Theorem~\ref{thmMain} with the condition that $M(\mbf A^{\top})$ is graphic or cographic, or more generally, that $M(\mbf A^{\top})$ is regular.

Let $D = (V,A)$ be a complete directed graph on $n$ nodes, where $n$ is even, and let $\mbf{D}$ be the node-arc incidence matrix of $D$. 
Let $v_0\in V$ and define $\overline{\mbf{D}}$ as the matrix obtained by deleting the row of $\mbf{D}$ corresponding to $v_0$.
Set
\[
\mbf{A} := \left[\begin{array}{rrr}
        -\mbf{I}_A & \overline{\mbf{D}}{}^\top & \mbf{0}\\
        -\mbf{I}_A & \mbf{0}\phantom{^\top} & \mbf{0}\\
        \mbf{0}^\top & \mbf{1}^\top & -2\\
        \mbf{0}^\top & -\mbf{1}^\top & 2\\
    \end{array}\right].
\]
We consider $\overline{\mbf{D}}$ rather than $\mbf{D}$ so that $\mbf{A}$ has full column rank.
Consider the polyhedron
\[
\begin{array}{r@{\hskip .1 cm}c@{\hskip .1 cm}l}
    \mcf{P}&:=&\left\{\begin{bmatrix}
        \mbf{x} \\
        \mbf{y} \\
        z
    \end{bmatrix}\in\mbb{R}^A_{+}\times \mbb{R}^{V - v_0} \times\mbb{R}:~
    \begin{array}{@{\hskip .1 cm}l}
    \forall ~(v,w)\in A:\\[.1cm]
    \quad x_{(v,w)}\ge y_w - y_v\\[.1 cm]
    \sum_{v\in V} y_v = 2z + 1
    \end{array}\right\}\\[.25 cm]
    &=&\left\{\begin{bmatrix}
        \mbf{x} \\
        \mbf{y} \\
        z
    \end{bmatrix}\in\mbb{R}^A\times \mbb{R}^{V - v_0} \times\mbb{R}:
    \mbf{A}
    \begin{bmatrix}
        \mbf{x} \\
        \mbf{y} \\
        z
    \end{bmatrix} \le 
    \left[\begin{array}{r}
        \mbf{0}\\
        \mbf{0}\\
        1\\
        -1
    \end{array}\right]\right\},
\end{array}
\]

The matrix $\mbf{A}$ is strictly bimodular, and $\mbf{b}\in \Span(\mbf{A})$ because the column corresponding to $z$ is $(\mbf{0}_A,\mbf{0}_A, -2, 2)^\top = -2\mbf{b}$.
The matrix $\mbf{A}$ is not cographic because the cographic property is preserved under deleting rows and columns, yet $\overline{\mbf{D}}$ has too many distinct rows to be cographic. 
Indeed, $\overline{\mbf{D}}$ has rank $|V|-1$ and $O(|V|^2)$ distinct rows, but the number of distinct non-zero rows in a cographic matrix is linear in the rank; see~\cite{J1979}.
Cevallos et al.~\cite{CWZ2018} show $ \conv(\mcf{P} \cap (\mbb{Z}^A \times \mbb{Z}^{V-v_0} \times\mbb{Z}))$ does not have a compact extended formulation in Theorem 5.4.

We briefly argue why $M(\mbf{A}^\top)$ is graphic.
The class of graphic matroids is closed under parallel extensions and adding \textbf{coloops}, which is an element not in span of the other elements.
The last two rows of $\mbf A$ are parallel, so we may delete the last row of $\mbf A$; call the result $\mbf{A}_0$.
The last row of $\mbf{A}_0$ is a coloop of $M(\mbf{A}_0^\top)$, so we may delete it; call the resulting matrix $\mbf{A}_1$.
For each row of $\mbf{A}_1$ with two $-1$ entries, subtract the column with the first $-1$ entry from the column with the second $-1$ entry.
The matrix $\mbf{A}_2$ obtained from $\mbf{A}_1$ by performing this operation has at most two nonzero entries in each row, and each row with two nonzero entries has a $1$ and a $-1$.
Thus, after scaling each row by $-1$, the transpose of the resulting matrix $\mbf{A}_3$ is a column submatrix of the canonical representation of $M(K_{|A|+|V|})$.
Hence, $M(\mbf{A}_3^\top)$ is graphic.
Column operations and row scaling do not change the row matroid.
Hence, $M(\mbf{A}^\top)$ is also graphic.
Thus, condition \emph{\ref{Assume2}} cannot be extended to include matrices representing a graphic row matroid.

\subsection{Necessity of Condition \emph{\ref{Assume3}}}\label{sec:ola}

Jia et al.~\cite{jia2023exact} demonstrate that there exists a polyhedron $\mcf{P}(\mbf{A}, \mbf{b})$ such that $\mbf{A}$ is strictly bimodular and $\mbf{b} \not\in \Span(\mbf{A})$, but $\conv(\mcf{P}(\mbf{A}, \mbf{b}) \cap \mbb{Z}^n)$ does not have a compact extended formulation.
In this subsection, we observe that their example also satisfies condition \emph{\ref{Assume2}} in Theorem, highlighting the necessity of \emph{\ref{Assume3}} for the statement to hold true.

Let $G=K_{n,n}=(U\cup V, E)$ be the complete bipartite graph on $2n$ nodes.
Each edge is colored red or blue.
Let $\gamma\in\{0,1\}^E$ indicate the edge coloring with $\gamma_e=1$ if and only if $e\in E$ is red.
The parity bipartite perfect matching polytope ${\rm PBPM}(n)$ is the convex hull of characteristic vectors of perfect matchings with an odd number of red edges:
\[
\begin{array}{r@{\hskip .1 cm}l@{\hskip 0 cm}l}
& {\rm PBPM}(n)\\
    =&\conv\left\{\mbf{x}\in\mbb{Z}_+^E:\mbf{A}_G\mbf{x}_e = \mbf{1}=\frac1n\mbf{A}_G\mbf{1},\right. &\left.\gamma^\top \mbf{x} \equiv 1 \Mod 2\right\}\\[.15 cm]
    =&\conv\left\{\mbf{x}\in\mbb{Z}_+^E:\mbf{D}_G\mbf{x}_e =\frac1n\mbf{D}_G\mbf{1} ,\right. &\left.\gamma^\top \mbf{x} \equiv 1 \Mod 2\right\},
\end{array}
\]
where $\mbf{A}_G$ is the node-edge incidence matrix of $G$, and $\mbf{D}_G$ is the node-arc incidence matrix of the graph $G$ with the orientation of all edges from nodes in $U$ to nodes in $V$. 
As $G$ is bipartite, we know that the last equation is true by negating the rows corresponding to the nodes $V$.
Jia et al.~\cite{jia2023exact} show ${\rm PBPM}(n)$ does not have a compact extended formulation. 

By introducing an auxiliary integer variable $z$, ${\rm PBPM}(n)$ can be viewed as a projection of 
\[
\conv\left\{\begin{bmatrix}
    \mbf{x}\\
    z
\end{bmatrix}\in\mbb{Z}^E\times\mbb{Z}:
\left[
\begin{array}{@{\hskip 0 cm}r@{\hskip .1 cm}r}
\mbf{D}_G & \mbf{0}\\
-\mbf{D}_G & \mbf{0}\\
-\mbf{I}_E & \mbf{0}\\
\gamma^\top & -2 \\
-\gamma^\top & 2
\end{array}
\right]
\begin{bmatrix}
    \mbf{x}\\
    z
\end{bmatrix}
\le
\left[
\begin{array}{@{\hskip 0 cm}r}
\frac1n\cdot\mbf{D}_G\mbf{1}\\[.1 cm]
-\frac1n\cdot\mbf{D}_G\mbf{1}\\
\mbf{0}\\
1\\
-1
\end{array}
\right]
\right\}.
\]
The coefficient matrix $\mbf{A}$ is full-rank and strictly bimodular.
However, the constraints $\mbf{x}\ge \mbf{0}$ imply that $\mbf{b} \not \in \Span(\mbf{A})$.

We briefly argue why $M(\mbf{A}^\top)$ is cographic.
The class of cographic matroids is closed under parallel extensions and adding coloops.
The last two rows of $\mbf A$ are parallel, so we may delete the last row; call the resulting matrix $\mbf{A}_0$.
The last row of $\mbf{A}_0$ is a coloop of $M(\mbf{A}^\top_0)$, so we may delete it.
The resulting matrix is a parallel extension of $[\mbf{D}_G^\top~-\mbf{I}_E]$, which represents a cographic matroid. Hence, $M(\mbf{A}^\top)$ is cographic.

\section{A proof of Corollary~\ref{corConforti}}\label{secGeneralizing}
In the proof of Corollary~\ref{corConforti} provided by Conforti et al.~\cite{CFHW2022}, the authors use a result of Lov\'{a}sz about graphs without disjoint odd cycles (see Theorem \ref{thm:Lovasz}) together with an even-face embedding to derive a reformulation~\cite[Lemma 12]{CFHW2022}.
Our proof of Corollary~\ref{corConforti} uses the same result of Lov\'{a}sz, but we focus on the cographic matroid property to bypass even-face embeddings and derive a more general reformulation approach (Lemma \ref{lem:reformulation_delta}).

A matroid $M = (E, r)$ is {\bf 3-connected} if there is no partition $(X,Y)$ of $E$ such that $|X|,|Y| \ge j$ and $r(X) + r(Y) - r(E) < j$ for some $j \in \{1,2\}$.
A matroid $M = (E, r)$ is {\bf internally 4-connected} if it is 3-connected and $\min\{|X|, |Y|\}=3$ for every partition $(X,Y)$ of $E$ such that $|X|,|Y| \ge 3$ and $r(X) + r(Y) - r(E) < 3$.
Seymour~\cite{seymour1995matroid} proved that if $M$ is an internally 4-connected regular matroid, then $M$ is either graphic, cographic, or a specific $10$-element matroid called $R_{10}$.
The follow result of Lov\'asz refines this in an important special case.
\begin{theorem}\label{thm:Lovasz}({\bf Lov\'{a}sz, cited in \cite{seymour1995matroid} as Theorem 6.7, page 546-547})
Let $M$ be the column matroid of the node-edge incidence matrix of a graph $G$ with $\mathrm{ocp}(G)\le 1$.
Then, $M$ is regular.
If $M$ is also internally 4-connected, then
\begin{enumerate}[label=(\arabic*), leftmargin = *]
\item if $M\cong R_{10}$, then $G\cong K_5$;
\item if $M$ is graphic, then there exists a node $v$ such that $G-v$ is bipartite, or there exists a circuit $\{e,f,g\}$ such that $G-\{e,f,g\}$ is bipartite;
\item if $M$ is cographic, then $G$ has an even-face embedding in the projective plane.
\end{enumerate}
\end{theorem}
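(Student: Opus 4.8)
The plan is to treat the universal claim (that $M$ is regular) and the case analysis (valid under internal $4$-connectivity) as two separate stages. I would first record that $M$ is the even-cycle, or all-negative signed-graphic, matroid of $G$: writing each edge $\{u,w\}$ as the column $\mbf{e}_u + \mbf{e}_w$, a set of edges is dependent exactly when it contains an even cycle, or two odd cycles joined by a path (or meeting in a single vertex). Because this incidence representation is valid over $\mathbb{R}$, the matroid $M$ is representable in characteristic $0$, and hence has no $F_7$ or $F_7^*$ minor automatically (both are representable only in characteristic $2$). By Tutte's excluded-minor characterization of regular matroids, regularity of $M$ therefore reduces to the absence of a $U_{2,4}$ minor. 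The crux of this first stage is to show that a $U_{2,4}$ minor of $M$ can be extracted if and only if $G$ contains two vertex-disjoint odd cycles; since $\mathrm{ocp}(G)\le 1$, no such minor exists and $M$ is regular. This minor analysis---tracking how two disjoint odd cycles together with the paths of $G$ linking them contract down to $U_{2,4}$---is the step I expect to carry the most technical weight.

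Granting regularity and internal $4$-connectivity, I would invoke Seymour's decomposition (as cited) to conclude that $M$ is graphic, cographic, or isomorphic to $R_{10}$. For the first alternative I would verify that the even-cycle matroid of $K_5$ is exactly $R_{10}$---both are self-dual, regular, internally $4$-connected matroids of rank $5$ on $10$ elements that are neither graphic nor cographic---and then argue that $K_5$ is the unique graph with $\mathrm{ocp}\le 1$ realizing $R_{10}$, which gives conclusion (1).

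The two remaining cases are where Lov\'{a}sz's structural description of graphs without two disjoint odd cycles must enter. When $M$ is graphic, the coincidence of the even-cycle matroid with an ordinary cycle matroid forces a small odd-cycle transversal: either a single vertex $v$ with $G-v$ bipartite, or a triangle $\{e,f,g\}$ (a $3$-element circuit) with $G-\{e,f,g\}$ bipartite, which is conclusion (2). When $M$ is cographic, its dual is graphic, and I would transfer the planarity of the dual graph through the correspondence between cographic even-cycle matroids and surface embeddings to produce an embedding of $G$ in the projective plane in which every face is bounded by an even closed walk, which is conclusion (3). I expect the cographic case to be the genuine obstacle: extracting the topological even-face embedding from the abstract cographic representation is precisely the topological graph theory that the present paper sidesteps by arguing directly with the cographic matroid in Lemma~\ref{lem:reformulation_delta}.
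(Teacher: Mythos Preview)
The paper does not prove Theorem~\ref{thm:Lovasz}. It is quoted verbatim as an external result of Lov\'asz (via Seymour's survey) and is used as a black box in the proof of the subsequent corollary. There is therefore no ``paper's own proof'' to compare your proposal against; you have attempted to supply an argument for a statement the authors deliberately import rather than establish.

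On the substance of your sketch: the regularity argument is sound in outline---the unsigned incidence matrix is a real representation, so $F_7$ and $F_7^*$ minors are excluded, and the equivalence between a $U_{2,4}$ minor in the even-cycle matroid and two vertex-disjoint odd cycles in $G$ is the correct mechanism (this is a known fact about all-negative signed-graphic matroids, though it does take real work to verify). Your handling of cases (1)--(3), however, is not a proof but a restatement of what needs to be shown. Identifying the even-cycle matroid of $K_5$ with $R_{10}$ is fine, but the uniqueness of $K_5$ among graphs with $\mathrm{ocp}\le 1$ realizing it is not addressed. In case (2) you assert that graphicness of the even-cycle matroid ``forces a small odd-cycle transversal'' without indicating why, and in case (3) you correctly flag the even-face projective-planar embedding as the hard topological step and then stop. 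These last two cases are precisely the content of Lov\'asz's theorem, and your proposal does not supply the structural arguments---it defers to them. That is exactly what the paper does too, which is why it cites the result rather than proving it.
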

We can then have the following corollary.
\begin{corollary}
Assume that $G = (V,E)$ is 4-connected with $\mathrm{oct}(G)\ge 4$ and $\mathrm{ocp}(G)=1$.
Let $\mbf{A}^\top$ be the node-edge incidence matrix of $G$.
Then $M = M(\mbf{A}^\top)$ is cographic.
\end{corollary}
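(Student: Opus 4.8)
The plan is to apply Theorem~\ref{thm:Lovasz} to the matroid $M = M(\mbf{A}^\top)$ and rule out all cases except the cographic one. First I would verify that the hypotheses of Theorem~\ref{thm:Lovasz} are met: since $\mbf{A}^\top$ is the node-edge incidence matrix of $G$ and $\mathrm{ocp}(G) = 1 \le 1$, the theorem immediately yields that $M$ is regular. The key additional input needed to invoke the structural trichotomy is that $M$ is internally 4-connected. Here I would use the assumption that $G$ is 4-connected as a graph, and translate this connectivity of $G$ into internal 4-connectivity of its graphic matroid $M$; this is a standard correspondence between graph connectivity and matroid connectivity (see, e.g.,~\cite{Oxley}), though one must be slightly careful because graph 4-connectivity and internal 4-connectivity of the cycle matroid are not identical notions and small exceptional cases (like $G$ having few edges) may need checking.

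Once $M$ is known to be regular and internally 4-connected, Theorem~\ref{thm:Lovasz} gives three possibilities: $M \cong R_{10}$, $M$ graphic, or $M$ cographic. The heart of the argument is to eliminate the first two using the hypothesis $\mathrm{oct}(G) \ge 4$. For case~(1), if $M \cong R_{10}$ then $G \cong K_5$; but $K_5$ has $\mathrm{oct}(K_5) \le 2$ (deleting two nodes leaves a path, which is bipartite), contradicting $\mathrm{oct}(G) \ge 4$. For case~(2), if $M$ is graphic then either there is a node $v$ with $G - v$ bipartite, giving $\mathrm{oct}(G) \le 1$, or there is a circuit $\{e,f,g\}$ with $G - \{e,f,g\}$ bipartite. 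In the latter subcase one must argue that deleting at most three well-chosen \emph{nodes} (the endpoints of the three edges, or fewer) destroys all odd cycles, yielding $\mathrm{oct}(G) \le 3$; either way this contradicts $\mathrm{oct}(G) \ge 4$. Since both (1) and (2) are impossible, $M$ must be cographic, which is the desired conclusion.

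The step I expect to be the main obstacle is the bookkeeping in case~(2): translating the edge-deletion statement ``$G - \{e,f,g\}$ is bipartite'' into a bound on the node-based quantity $\mathrm{oct}(G)$. Deleting the endpoints of the three edges in the circuit removes at most six nodes, which is too many; the sharper bound requires observing that a triangle (circuit of size three) sits on only three nodes, so deleting those three nodes removes all three edges and leaves a bipartite graph, giving $\mathrm{oct}(G) \le 3$. I would need to confirm that the circuit $\{e,f,g\}$ in a graphic matroid really corresponds to a triangle (three edges on three nodes) rather than a longer structure — in a graphic matroid a circuit of size three is precisely a triangle, so this is clean. With that identification, $\mathrm{oct}(G) \le 3 < 4$ gives the contradiction, and the remaining subcase with a single node $v$ is immediate. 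The other potentially delicate point is confirming internal 4-connectivity from graph 4-connectivity, but for $G$ with enough edges this follows from the standard dictionary between the two connectivity notions.
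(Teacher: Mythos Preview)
Your overall plan matches the paper's: invoke Theorem~\ref{thm:Lovasz}, check internal 4-connectivity, and eliminate cases (1) and (2) using $\oct(G)\ge 4$. The genuine gap is in the internal 4-connectivity step, and it stems from a misidentification of $M$. You repeatedly call $M$ ``the graphic matroid of $G$'', but $\mbf{A}^\top$ is the \emph{unsigned} $0/1$ node-edge incidence matrix; over $\mathbb{Q}$ its column matroid is the signed-graphic (even-cycle) matroid of $G$, with rank function $r(X)=|V(G[X])|-b(G[X])$ counting \emph{bipartite} components, not the cycle matroid with $r(X)=|V(G[X])|-c(G[X])$. The ``standard dictionary'' between graph $k$-connectivity and cycle-matroid connectivity that you appeal to is therefore the wrong tool. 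The paper spends almost the entire proof on a technical claim establishing internal 4-connectivity directly from this signed-graphic rank function, and that argument genuinely uses all three hypotheses: 4-connectivity of $G$, $\ocp(G)=1$ (to obtain $r(E)=|V|$ and, together with $\oct(G)\ge 4$, to rule out triangles), and $\oct(G)\ge 4$ again at the end to derive a contradiction from a would-be 3-separation with a 4-vertex interface. ``Slightly careful bookkeeping for small cases'' substantially understates what is required.

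Two smaller points. Your computation for case (1) is off: $K_5$ minus two vertices is $K_3$, a triangle, not a path, so $\oct(K_5)=3$ rather than $\le 2$; fortunately $3<4$ still gives the contradiction. In case (2), your justification that $\{e,f,g\}$ is a triangle of $G$ (``a size-three circuit in a graphic matroid is a triangle'') again leans on the mistaken identification $M=M(G)$; indeed, in the even-cycle matroid of a simple graph there are no circuits of size three at all, since a triangle is independent there. The intended reading of Lov\'asz's condition is that $\{e,f,g\}$ is a 3-cycle of $G$, so your conclusion $\oct(G)\le 3$ in that subcase is correct, but the reasoning you give for it does not stand.
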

\begin{proof}
By Theorem \ref{thm:Lovasz} we know that $M$ is regular.
It is straightforward, but technical, to show that $M$ is internally 4-connected (in fact, we show that $M$ is 4-connected).

\begin{claim}
$M$ is internally 4-connected.
\end{claim}

\begin{cpf}
Let $r$ be the rank function of $M$.
For a subgraph $H$ of $G$ we write $c(H)$ and $b(H)$ for the number of components and bipartite components, respectively, of $H$.
The matroid $M$ is a signed-graphic matroid with underlying graph $G$; we direct the reader to \cite{Oxley} for background information on signed-graphic matroids.
For this proof we only need the fact that if $X$ is a set of edges of $G$, then $r(X) = |V(G[X])| - b(G[X])$, where $G[X]$ is the graph induced by $X$; see \cite{SLILATY2005207}. 

Suppose $M$ is not internally 4-connected.
Hence, there is a partition $(X,Y)$ of $E$ such that 
$
r(X) + r(Y) - r(E) < j
$
and $|X|,|Y| \ge j$ for some $j \in \{1,2,3\}$.
Choose such a partition with $c(G[X]) + c(G[Y])$ as small as possible.
Let $Z = V(G[X]) \cap V(G[Y])$.
We will argue that 
\begin{equation}\label{eqC3Claim}
\begin{array}{l}
\text{each component of $G[X]$ or $G[Y]$ has a vertex that is not in $Z$.}
\end{array}
\end{equation}
By symmetry, it suffices to prove this for $G[X]$.
Note that each component of $G[X]$ either has vertex set contained in $Z$ and at most three vertices, or contains at least four vertices in $Z$, because $G$ is $4$-connected.
The graph $G$ has no triangles because $\oct(G) \ge 4$ and $\ocp(G) = 1$. 
Hence, each component of $G[X]$ with at most three vertices and vertex set contained in $Z$ is a path with one or two edges.

As a first step towards proving~\eqref{eqC3Claim}, assume for the sake of contradiction that every component of $G[X]$ has its vertex set contained in $Z$ and at most three vertices.
Under this assumption, we have $r(X) = |X|$.
Also, the vertex set of $G[Y]$ is equal to the vertex set of $G$.
Let $X' \subseteq X$ be such that $|X - X'| = j$ (possibly $X' = \varnothing$).
Note that $r(X-X')\le |X-X'| = |X| - |X'| = r(X) - |X'|$. 
Also, $r(Y \cup X') \le r(Y) + |X'|$ because adding $|X'|$ elements increases the rank by at most $|X'|$.
Thus,
\begin{equation}\label{eqC3E1}
\begin{array}{rl}
& r(X - X') + r(Y \cup X') - r(E)\\
\le& r(X) - |X'| + r(Y) + |X'| - r(E) < j,
\end{array}
\end{equation}
and $|X - X'|, |Y\cup X'| \ge j$.
If $r(Y \cup X') = r(E)$, then 
\[
r(X - X') + r(Y \cup X') - r(E) = |X - X'| \ge j.
\]
However, this contradicts~\eqref{eqC3E1}. 
Hence, $r(Y \cup X') < r(E)$.
Since $G[Y \cup X']$ and $G$ have the same vertex set and $G$ is connected, this means that $G[Y \cup X']$ has more components than $G$.
Moreover, because $|X - X'| = j \le 3$, this implies that $X - X'$ is a set of at most three edges whose deletion disconnects $G$.
However, then $G$ has a set of at most three vertices whose deletion disconnects $G$, which contradicts that $G$ is 4-connected.
Therefore, $G[X]$ has a component whose vertex set is not completely contained in $Z$, and contains at least four vertices in $Z$.
Moreover, this component has at least five vertices, and therefore has at least four edges.
Denote this edge set by $X^*$.
Thus, $|X^*| \ge 4$.

As a final step towards proving~\eqref{eqC3Claim}, assume to the contrary that $G[X]$ has a component with vertex set contained in $Z$ and at most three vertices.
Let $X''$ be the set of edges of this component, and recall that $|X''| \in \{1,2\}$.
The previous paragraph demonstrates that $G[X^*]$ is not this component.
Thus, $|X - X''| \ge |X^*| \ge 4$.
Note that $|V(G[X - X''])| = |V(G[X])| - (|X''| + 1)$ because $G[X'']$ is a path, and that $b(G[X - X'']) = b(G[X]) - 1$ because $G[X'']$ is a bipartite component of $G[X]$.
Therefore, $r(X - X'') = r(X) - |X''|$.
Consider the partition $(X - X'', Y \cup X'')$ of $E$.
We have
\[
\begin{array}{rl}
&r(X - X'') + r(Y \cup X'') - r(E)\\
\le& r(X) - |X''| + r(Y) + |X''| - r(E) < j.
\end{array}
\]
However, this contradicts the minimality of $c(G[X]) + c(G[Y])$ because $|X - X''| \ge 4 > j$.
Therefore, each component of $X$ has a vertex that is not in $Z$, which completes the proof of the statement claimed in~\eqref{eqC3Claim}.

Claim~\eqref{eqC3Claim} implies that $|Z| \ge 4 \cdot c(G[X])$ because $G$ is $4$-connected.
By applying the same reasoning to $G[Y]$, we have $|Z| \ge 4\cdot c(G[Y])$.
Without loss of generality, we may assume that $c(G[X]) \ge c(G[Y])$.
The graph $G$ has an odd cycle because $\ocp(G) = 1$. 
Thus, $r(E) = |V(G)|$ and 
\[
r(E) = |V(G)| = |V(G[X])| + |V(G[Y])| - |Z|.
\]
Given that $j \le 3$, we have 
\[
\begin{array}{r@{\hskip .2 cm}c@{\hskip .2 cm}l}
    2 &\ge& r(X) + r(Y) - r(E) \\
    &=& |Z| - b(G[X]) - b(G[Y]) \\
    & \ge& 4c(G[X]) - b(G[X]) - b(G[Y]) \\
    & \ge& (3c(G[X]) - b(G[X]) ) +  c(G[Y]) - b(G[Y])) \\
    & \ge& 2 + 0,
\end{array}
\]
which implies equality throughout.
Hence, $c(G[X]) = c(G[Y]) = b(G[X]) = b(G[Y]) = 1$ and $|Z| = 4$.
Therefore, $G[X]$ and $G[Y]$ are connected bipartite graphs that have four common vertices.
Thus, every odd cycle of $G$ contains two vertices in $Z$, which implies that $\oct(G) \le 3$, a contradiction.
Consequently, $M$ is internally 4-connected. 
\end{cpf}

Following the claim, we see that outcome \emph{(1)}, \emph{(2)}, or \emph{(3)} from Theorem~\ref{thm:Lovasz} holds. 
Outcomes \emph{(1)} and \emph{(2)} cannot hold because $\ocp(G) \ge 4$, so outcome \emph{(3)} holds, as desired.
\qed\end{proof}

We use the following lemma to recover Corollary \ref{corConforti}.
\begin{lemma}\label{lemLem8}({\bf Lemma 8 in \cite{CFHW2022}})
For a graph $G$ on $n$ nodes with edge-node incidence matrix $\mbf{A}$, and $\mbf{b}=\mbf{1}$, it holds that
\(
\STAB(G) = \conv(\mcf{P}(\mbf{A}, \mbf{b}) \cap \mbb{Z}^n)\cap [0,1]^n.
\)
\end{lemma}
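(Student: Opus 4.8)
The plan is to work directly with the set $\chi$ of incidence vectors of stable sets of $G$, for which $\STAB(G) = \conv(\chi)$, and to abbreviate $T := \mcf{P}(\mbf{A},\mbf{1}) \cap \mbb{Z}^n$ for the integral points and $Q := \conv(T) \cap [0,1]^n$ for the right-hand side of the claimed identity. The inclusion $\STAB(G) \subseteq Q$ is immediate: every $\mbf{v} \in \chi$ is integral with $\mbf{A}\mbf{v} \le \mbf{1}$ and $\mbf{0} \le \mbf{v} \le \mbf{1}$, so $\chi \subseteq T \cap [0,1]^n \subseteq Q$, and since $Q$ is convex we get $\STAB(G) = \conv(\chi) \subseteq Q$. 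The real work is the reverse inclusion $Q \subseteq \STAB(G)$.

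First I would record two structural facts. Since $T \subseteq \mcf{P}(\mbf{A},\mbf{1})$ and the latter is convex, $\conv(T) \subseteq \mcf{P}(\mbf{A},\mbf{1})$, so $\conv(T) \cap \mbb{Z}^n = \mcf{P}(\mbf{A},\mbf{1}) \cap \mbb{Z}^n = T$; intersecting with the cube then shows that the integral points of $Q$ are exactly $T \cap [0,1]^n = \chi$. The second fact is the key property of the coordinatewise truncation $\phi(\mbf{x})_i := \min(1,\max(0,x_i))$: if $\mbf{v} \in T$ then $\phi(\mbf{v}) \in \chi$. Indeed $\phi(\mbf{v}) \in \{0,1\}^n$ because $\mbf{v}$ is integral, and for an edge $\{i,j\}$ one cannot have $\phi(\mbf{v})_i = \phi(\mbf{v})_j = 1$, since that would force $v_i, v_j \ge 1$ and hence $v_i + v_j \ge 2 > 1$, contradicting $\mbf{A}\mbf{v} \le \mbf{1}$.

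With these facts in hand, I would prove $Q \subseteq \conv(\chi)$ as follows. Take $\mbf{p} \in Q$ and a finite decomposition $\mbf{p} = \sum_\ell \lambda_\ell \mbf{v}^\ell$ with $\mbf{v}^\ell \in T$, $\lambda_\ell > 0$, and $\sum_\ell \lambda_\ell = 1$. If every atom lies in $[0,1]^n$, then each $\mbf{v}^\ell \in \chi$ and we are done. Otherwise I would decrease the weighted excess $\sum_\ell \lambda_\ell \, \|\mbf{v}^\ell - \phi(\mbf{v}^\ell)\|_1$ by an exchange step that preserves $\mbf{p}$ and keeps all atoms in $T$: pick a coordinate $i$ in which some atom overshoots ($v^\ell_i \ge 2$) or undershoots ($v^m_i \le -1$); because $p_i \in [0,1]$ there must be a complementary atom on the other side, and one transfers an integral unit of the $i$-coordinate from the high atom to the low atom. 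Lowering a coordinate never violates $\mbf{A}\mbf{x} \le \mbf{1}$, so the high side stays in $T$; iterating until no atom leaves the cube and then applying $\phi$ expresses $\mbf{p}$ as a convex combination of points of $\chi$, giving $\mbf{p} \in \STAB(G)$.

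The main obstacle is precisely this exchange step. Raising the $i$-coordinate of the undershooting atom can violate an edge constraint $x_i + x_j \le 1$ that is tight at that atom, forcing a compensating decrease along the edge $\{i,j\}$ that may cascade through further tight edges. Making the repair rigorous thus amounts to an uncrossing or cycle-cancelling argument on the directed structure of tight edges, in the spirit of the circulation reformulation used elsewhere in this paper; the nonlinearity of $\phi$ is exactly what blocks a one-line Jensen-type shortcut and makes this combinatorial bookkeeping the heart of the proof. As a cleaner alternative I would also consider proving the equivalent assertion that forming the integer hull and intersecting with the cube commute here, by showing every vertex of $Q$ is integral and hence lies in $\chi$; the same truncation property drives that route as well.
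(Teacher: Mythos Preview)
The present paper does not prove this lemma at all: it is quoted verbatim as ``Lemma~8 in \cite{CFHW2022}'' and used as a black box in Section~\ref{secGeneralizing}. So there is no in-paper proof to compare against; the reference is to Conforti, Fiorini, Huynh, and Weltge.

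Judged on its own, your proposal is not yet a proof. The inclusion $\STAB(G)\subseteq Q$ and the two structural observations (that the integer points of $Q$ are exactly $\chi$, and that the coordinatewise truncation $\phi$ sends $T$ into $\chi$) are correct and useful. But your exchange scheme for the reverse inclusion has a genuine gap, which you yourself flag: transferring an integral unit in coordinate $i$ from a high atom to a low atom can violate an edge constraint on the low side, and you do not show how to repair this. Your suggested remedy---an ``uncrossing or cycle-cancelling argument \dots\ in the spirit of the circulation reformulation used elsewhere in this paper''---points in the wrong direction: the circulation machinery in this paper relies on the cographic hypothesis of Theorem~\ref{thmMain}, whereas Lemma~\ref{lemLem8} is stated and used for \emph{arbitrary} graphs $G$. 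There is also a smaller issue you gloss over: the unit transfer only preserves $\mbf{p}$ when the two weights $\lambda_\ell,\lambda_m$ coincide, so one must first split atoms to equalize weights before exchanging.

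A route closer to what one finds in \cite{CFHW2022} avoids two-sided exchanges entirely. Observe that the one-sided cap $\psi(\mbf v)_i:=\min(v_i,1)$ maps $T$ into $T':=T\cap\{\mbf x\le\mbf 1\}$ (if $v_i\ge 2$ then every neighbour $j$ has $v_j\le -1$, so edge constraints survive), and only \emph{lowers} coordinates; dually, $\max(\,\cdot\,,0)$ maps $T'$ into $\chi$ and only \emph{raises}. Together with the down-monotonicity of $\STAB(G)$ in $[0,1]^n$, these two truncations let one compare $\mbf p$ with a point of $\STAB(G)$ without any cascading repair step. Filling in that comparison (or, equivalently, showing every vertex of $Q$ is integral) is the actual content of the lemma; your write-up stops short of it.
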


Consider $\mbf{A}$ and $\mbf{b}$ in Lemma~\ref{lemLem8}.
Observe that $\mbf{b}=\mbf{1}$ is in the span of the columns of $\mbf{A}$ because it is obtained by summing all the columns and dividing by 2.
%
If $\ocp(G) = 1$, then $\mbf{A}$ is bimodular.
By Theorem \ref{thmMain} and Remark~\ref{remark2}, we know $\conv(\mcf{P}(\mbf{A}, \mbf{b}) \cap \mbb{Z}^n)$ admits an extended formulation of size in $O(n^2)$.
With Lemma~\ref{lemLem8}, we obtain an extended formulation and recover Corollary \ref{corConforti}.
We emphasize that the difference between this approach and Conforti et al. is the method of finding an extended formulation for $ \conv(\mcf{P}(\mbf{A}, \mbf{b}) \cap \mbb{Z}^n)$.

\section{Extended formulations for a new family}\label{secNew}

We conclude by showing Theorem~\ref{thmMain} is strictly more general than Corollary~\ref{corConforti}.
At a high level, if $\mbf{A}$ is covered by Corollary~\ref{corConforti}, then $M^*(\mbf{A}^\top)$ represents a projective planar graph $H$; Theorem~\ref{thmMain} is not restricted to this class of matrices.

Let $\mbf{A}^\top$ be the node-edge incidence matrix of a graph $G$ satisfying the conditions of Corollary~\ref{corConforti}.
Hence, $M(\mbf{A}^\top)$ represents the signed graphic matroid of the signed graph $\Sigma$ of $G$.
As $M(\mbf A^\top)$ is cographic, there is a (2-connected) graph $H$ such that $M^*(H) = M(\Sigma)$. 
Slilaty proved that $H$ is projective planar; see~\cite[Theorem 3, part 1]{SLILATY2005207}.

However, not all graphs are projective planar.
For example, let $\mbf {A}^\top$ be a strictly $\Delta$-modular representation of the cographic matroid of a complete graph $K_r$ with $r \ge 7$.
Let $[\mbf{I}_6 ~\mbf{D}_7]$ represent the graphic matroid $K_7$.
For
$
\mbf{A}^\top = \mbf{B}[\begin{array}{rr}
        -\mbf{D}_7^\top & \mbf{I}_{15}
    \end{array}],
$
where $|\det\mbf{B}| = \Delta$, we have $M(\mbf{A}^\top) = M([-\mbf{D}_7^\top ~ \mbf{I}_{15}]) = M^*([\mbf{I}_6 ~\mbf{D}_7])=M^*(K_7)$.
By Theorem~\ref{thmMain}, there is a compact extended formulation for $\conv(\mcf{P}(\mbf{A}, \mbf{b}) \cap \mbb{Z}^n)$ for suitable $\mbf{b}$.

\smallskip

\noindent{\bf Acknowledgements.} J. Paat was supported by a Natural Sciences and Engineering Research Council of Canada Discovery Grant [RGPIN-2021-02475]. 

\bibliographystyle{splncs04}
\bibliography{references.bib}

\begin{thebibliography}{10}
\providecommand{\url}[1]{\texttt{#1}}
\providecommand{\urlprefix}{URL }
\providecommand{\doi}[1]{https://doi.org/#1}

\bibitem{AFJKSWY2024}
Aprile, M., Fiorini, S., Joret, G., Kober, S., Seweryn, M., Weltge, S.,
  Yuditsky, Y.: Integer programs with nearly totally unimodular matrices: the
  cographic case. Available online at arXiv:2407.09477  (2024)

\bibitem{AEGOVW2016}
Artmann, S., Eisenbrand, F., Glanzer, C., Oertel, T., Vempala, S., Weismantel,
  R.: A note on non-degenerate integer programs with small sub-determinants.
  Operations Research Letters  \textbf{44}(5),  635--639 (2016)

\bibitem{AWZ2017}
Artmann, S., Weismantel, R., Zenklusen, R.: A strongly polynomial algorithm for
  bimodular integer linear programming. In: Proceedings of the 49th Annual ACM
  SIGACT Symposium on Theory of Computing. pp. 1206--1219 (2017)

\bibitem{AS2022}
Averkov, G., Schymura, M.: {On the maximal number of columns of a
  $\Delta$-modular matrix}. In: Aardal, K., L.~Sanit{\`a}, L. (eds.) {Integer
  Programming and Combinatorial Optimization}. pp. 29--42. Springer
  International Publishing, Cham (2022)

\bibitem{B1985}
Balas, E.: Disjunctive programming and a hierarchy of relaxations for discrete
  optimization problems. {SIAM Journal on Matrix Analysis and Applications}
  \textbf{6}(3),  466--486 (1985)

\bibitem{bixby1980converting}
Bixby, R.E., Cunningham, W.H.: Converting linear programs to network problems.
  Mathematics of Operations Research  \textbf{5}(3),  321--357 (1980)

\bibitem{C1963}
Camion, P.: Caract\'{e}risation des matrices unimodulaires. {Cahiers Centre
  \'{E}tudes Rech. Op\'{e}r.}  \textbf{5},  181--190 (1963)

\bibitem{CWZ2018}
Cevallos, A., Weltge, S., Zenklusen, R.: Lifting linear extension complexity
  bounds to the mixed-integer setting. Proceedings of the 2018 Annual ACM-SIAM
  Symposium on Discrete Algorithms pp. 788 -- 807 (2018)

\bibitem{CFHSW2020}
Conforti, M., Fiorini, S., Huynh, T., Joret, G., Weltge, S.: The stable set
  problem in graphs with bounded genus and bounded odd cycle packing number.
  In: Proceedings of the Thirty-First Annual ACM-SIAM Symposium on Discrete
  Algorithms (SODA). p. 2896–2915. Society for Industrial and Applied
  Mathematics, USA (2020)

\bibitem{CFHW2022}
Conforti, M., Fiorini, S., Huynh, T., Weltge, S.: Extended formulations for
  stable set polytopes of graphs without two disjoint odd cycles. Mathematical
  Programming  \textbf{192}(1),  547--566 (2022)

\bibitem{FiJoWeYu2022}
Fiorini, S., Joret, G., Weltge, S., Yuditsky, Y.: Integer programs with bounded
  subdeterminants and two nonzeros per row. In: 2021 IEEE 62nd Annual Symposium
  on Foundations of Computer Science (FOCS). pp. 13--24 (2022)

\bibitem{GWZ2018}
Glanzer, C., Weismantel, R., Zenklusen, R.: {On the number of distinct rows of
  a matrix with bounded subdeterminants}. {SIAM Journal on Discrete
  Mathematics}  \textbf{32}(3),  1706--1720 (2018)

\bibitem{gribanov2022delta}
Gribanov, D., Shumilov, I., Malyshev, D., Pardalos, P.: On {$\Delta$}-modular
  integer linear problems in the canonical form and equivalent problems.
  Journal of Global Optimization pp. 1--61 (2022)

\bibitem{GKS1995}
Grossman, J., Kulkarni, D., Schochetman, I.: {On the minors of an incidence
  matrix and its Smith normal form}. Linear Algebra and its Applications
  \textbf{218},  213--224 (1995)

\bibitem{J1979}
Jaeger, F.: Flows and generalized coloring theorems in graphs. Journal of
  Combinatorial Theory, Series B  \textbf{26}(2),  205--216 (1979)

\bibitem{jia2023exact}
Jia, X., Svensson, O., Yuan, W.: The exact bipartite matching polytope has
  exponential extension complexity. In: Proceedings of the 2023 Annual ACM-SIAM
  Symposium on Discrete Algorithms (SODA). pp. 1635--1654. SIAM (2023)

\bibitem{LPSX2022}
Lee, J., Paat, J., Stallknecht, I., Xu, L.: {Polynomial upper bounds on the
  number of differing columns of $\Delta$-modular integer programs}.
  Mathematics of Operations Research  (2022)

\bibitem{NNSZ2023}
N{\"a}gele, M., N{\"o}bel, C., Santiago, R., Zenklusen, R.: {Advances on
  Strictly $\Delta$-Modular IPs}. In: Del~Pia, A., Kaibel, V. (eds.) Integer
  Programming and Combinatorial Optimization. pp. 393--407. Springer
  International Publishing (2023)

\bibitem{NaSaZe2022}
Nägele, M., Santiago, R., Zenklusen, R.: Congruency-constrained {TU} problems
  beyond the bimodular case. In: Proceedings of the 2022 Annual ACM-SIAM
  Symposium on Discrete Algorithms (SODA). pp. 2743--2790 (2022)

\bibitem{Oxley}
Oxley, J.: Matroid Theory. Oxford University Press, 2 edn. (Feb 2011)

\bibitem{OW2021}
Oxley, J., Walsh, Z.: 2-modular matrices. SIAM Journal on Discrete Mathematics
  \textbf{36}(2),  1231--1248 (2022)

\bibitem{PSWX2023}
Paat, J., Stallknecht, I., Walsh, Z., Xu, L.: On the column number and
  forbidden submatrices for {$\Delta$}-modular matrices. {SIAM Journal on
  Discrete Mathematics}  \textbf{38}(1) (2024)

\bibitem{AS1986}
Schrijver, A.: Theory of linear and integer programming. John Wiley \& Sons,
  Inc. New York, NY (1986)

\bibitem{seymour1995matroid}
Seymour, P.: Matroid minors, Handbook of Combinatorics, vol.~1. North-Holland
  (Elsevier), Amsterdam (1995)

\bibitem{S1980}
Seymour, P.: Decomposition of regular matroids. Journal of Combinatorial Theory
  B  \textbf{28},  305--359 (1980)

\bibitem{SLILATY2005207}
Slilaty, D.C.: On cographic matroids and signed-graphic matroids. Discrete
  Mathematics  \textbf{301}(2),  207--217 (2005)

\bibitem{VC2009}
Veselov, S., Chirkov, A.: Integer programming with bimodular matrix. Discrete
  Optimization  \textbf{6},  220--222 (2009)

\end{thebibliography}

\end{document}